\documentclass{article} 
\usepackage[usenames,dvipsnames]{color}
\usepackage{amsmath} 
\usepackage{amssymb} 
\usepackage{bm}
\usepackage{amsthm}
\usepackage{CJK}
\usepackage{indentfirst}
\usepackage{tikz}
\usepackage{amsmath,amsfonts,amsthm,mathrsfs}
\usepackage{hyperref}
\numberwithin{equation}{section}
\usepackage{geometry}

\usepackage{esint} 

\numberwithin{equation}{section}
\newtheorem{theorem}{Theorem}
\newtheorem{lemma}{Lemma}

\newtheorem{corollary}{Corollary}
\newtheorem{definition}{Definition}
\newtheorem{remark}{Remark}

\usepackage{algorithm,algpseudocode}
\usepackage{amsthm}

\def\NN{\mathbb{N}}
\def\PP{\mathbb{P}}
\def\RR{\mathbb{R}}

\def\YY{\mathbb{Y}}

\def\SS{\mathbb{S}}

\def\mal{\max\limits}
\def\mil{\min\limits}
\def\sul{\sum\limits}

\def\ss{ s}

\newcommand{\lrt}[1]{\left({#1}\right)}

\def\tQ{\widetilde{Q}}

\title{Condition Numbers and Eigenvalue Spectra\\ of Shallow  Networks on Spheres}
\author{Xinliang Liu\thanks{Ocean University of China, Qingdao 266100, China}\and Tong Mao\thanks{King Abdullah University of Science and Technology, Thuwal 23955, Saudi Arabia}\and Jinchao Xu\footnotemark[2]}
\date{}

\begin{document}
	
\maketitle

\begin{abstract}
    We present an estimation of the condition numbers of the \emph{mass} and \emph{stiffness} matrices arising from shallow ReLU$^k$ neural networks defined on the unit sphere~$\mathbb{S}^d$.
    In particular, when $\{\theta_j^*\}_{j=1}^n \subset \mathbb{S}^d$ is \emph{antipodally quasi-uniform}, the condition number is sharp. Indeed, in this case, we obtain sharp asymptotic estimates for the full spectrum of eigenvalues and characterize the structure of the corresponding eigenspaces, showing that the smallest eigenvalues are associated with an eigenbasis of low-degree polynomials while the largest eigenvalues are linked to high-degree polynomials. This spectral analysis establishes a precise correspondence between the approximation power of the network and its numerical stability. 
\end{abstract}

\section{Introduction}

Neural networks have become a cornerstone of modern machine learning and artificial intelligence, with widespread applications across fields such as computer vision~\cite{krizhevsky2012imagenet}, natural language processing~\cite{vaswani2017attention}, scientific computing~\cite{raissi2019physics}, healthcare~\cite{esteva2017dermatologist}, and finance~\cite{heaton2017deep}. Their remarkable empirical success has motivated a growing body of theoretical work aimed at understanding their expressive power, approximation capabilities, optimization landscape, and generalization behavior.

From a theoretical perspective, it has long been known that shallow neural networks with non-polynomial activation functions possess the universal approximation property, provided the network is sufficiently wide~\cite{cybenko1989approximation,hornik1989multilayer,leshno1993multilayer}. Building upon this foundational insight, researchers have sought to quantify the rate of approximation in various function spaces. In particular, Barron spaces~\cite{barron1993universal,barron1994approximation,maiorov2000stochastic,klusowski2018approximation}, Sobolev spaces~\cite{mhaskar1993approximation,pinkus1999approximation,yang2024optimal}, and Besov spaces~\cite{siegel2023optimal} have been studied extensively in the context of shallow networks.

Among the various activation functions studied in the literature, the Rectified Linear Unit (ReLU) and its higher-order variants $\sigma_k(x) := \max(0,x)^k$ have received particular attention due to their simplicity, nonlinearity, and homogeneity. A standard shallow ReLU$^k$ neural network with $n$ neurons takes the form
\[
\Sigma_n^k := \left\{ f(x) = \sum_{j=1}^n a_j \sigma_k(\theta_j \cdot x + b_j) : a_j \in \RR,\, (\theta_j, b_j) \in \RR^{d} \times \RR \right\},
\]
and is known to approximate functions in Barron or Sobolev spaces with rates depending on the regularity of the target function~\cite{klusowski2018approximation,siegel2023optimal,yang2024optimal}. For example, in the context of Barron-type spaces, it has been shown that ReLU$^k$ networks can achieve the optimal approximation rate $\mathcal{O}(n^{-\frac{d + 2k + 1}{2d}})$, for functions in $\mathcal{B}^k(\Omega)$, the Barron space associated with the activation $\sigma_k$~\cite{siegel2023optimal}.

More recently, we proposed a linearized variant of shallow ReLU$^k$ networks \cite{liu2025achieving}, in which the inner parameters $\{\theta_j^*\}_{j=1}^n \subset \mathbb{S}^d$ are fixed, and only the outer coefficients are optimized. The resulting function space, denoted by
\[
L_n^k := \mathrm{span}\left\{ \sigma_k(\theta_j^* \cdot x) \right\}_{j=1}^n,
\]
is a linear subspace of $\Sigma_n^k$, and admits a convex least-squares training procedure. Despite this simplification, we proved that $L_n^k$ achieves the same approximation rate as $\Sigma_n^k$ over Sobolev spaces:
\[
\inf_{f_n \in L_n^k} \|f - f_n\|_{L^2(\Omega)} \lesssim n^{-\frac{d + 2k + 1}{2d}}, \qquad \text{for } f \in \mathcal H^{\frac{d + 2k + 1}{2}}(\Omega).
\]
The linear structure of $L_n^k$ not only avoids the non-convexity inherent in training $\Sigma_n^k$, but also permits refined analysis of stability and conditioning—topics largely inaccessible in the nonlinear setting.

While the approximation properties of shallow ReLU$^k$ networks are now well understood, a critical aspect that remains largely unexplored is their \emph{numerical stability}. In numerical linear algebra, the stability and accuracy of solving a linear system $A x = b$ depend crucially on the \emph{condition number} of the coefficient matrix,
$$\kappa(A) := \|A\|_2 \, \|A^{-1}\|_2,$$
which measures the sensitivity of the solution $x$ to perturbations in the data or to roundoff errors~\cite{trefethen1997numerical,golub2013matrix,saad2003iterative}. When the condition number $\kappa(A)$ is well controlled, numerical solvers can achieve faster convergence, maintain high accuracy, and remain robust against data perturbations and floating-point roundoff errors~\cite{axelsson1994iterative,benzi2002preconditioning}. Ensuring good conditioning is therefore a central objective in classical numerical methods such as finite elements~\cite{ciarlet1978fem,xu1992iterative}.

In the context of shallow neural networks, the least-squares formulation leads to symmetric positive definite Gram matrices, whose conditioning governs both the stability and efficiency of training. Two such matrices play a central role in our analysis:
\begin{equation}
    M = \left( \int_{\SS^d} \sigma_k(\theta_i^* \cdot \omega) \, \sigma_k(\theta_j^* \cdot \omega) \, d\omega \right)_{i,j=1}^n,
\end{equation}
referred to as the \emph{mass matrix}, involving $\mathcal L^2$ inner products of the basis functions, and
\begin{equation}
    K_s = \left( \int_{\SS^d} (\nabla_\omega^s \sigma_k(\theta_i^* \cdot \omega)) \cdot (\nabla_\omega^s \sigma_k(\theta_j^* \cdot \omega)) \, d\omega \right)_{i,j=1}^n,
\end{equation}
referred to as the \emph{stiffness matrix}, involving inner products of their gradients. The condition numbers $\kappa(M)$ and $\kappa(K_s)$ quantify the spectral stability of $L_n^k$ in $\mathcal L^2$ and $\mathcal H^1$ norms, respectively, and directly impact the robustness and efficiency of any computational procedure built upon this basis.

These matrices are not merely theoretical constructs; they arise naturally in fundamental computational tasks. The mass matrix $M$ is the Gram matrix for the basis functions in $\mathcal L^2$. In a standard least-squares regression, the problem reduces to solving the normal equations $J^\top J a = J^\top y$, where the Jacobian $J$ contains the basis functions evaluated at the data points. The matrix $J^\top J$ is the empirical mass matrix, which converges to $M$ as the data becomes dense. The stiffness matrix $K_s$ appears when using variational methods to solve $2s$-order elliptic PDEs. Furthermore, in the context of fully nonlinear networks, the $s=1$ stiffness matrix $K_1$ is closely related to the Neural Tangent Kernel (NTK)\cite{jacot2018ntk}, which is formed from the Jacobian of the network output with respect to the inner-layer parameters.

In classical FEM, the (properly scaled) mass matrix is well conditioned, while the stiffness matrix exhibits polynomial growth in the degrees of freedom (e.g., $\kappa \simeq h^{-2}\simeq n^{2/d}$ on quasi-uniform meshes), motivating preconditioning strategies~\cite{ciarlet1978fem,xu1992iterative}. In machine learning, a useful spectral perspective is provided by the \emph{neural tangent kernel} (NTK)~\cite{jacot2018ntk}: in the infinite-width limit, training dynamics linearize and reduce to kernel regression with a certain kernel. The spectrum of NTK and its variants has been analyzed extensively for different architectures and activation functions, including ReLU, erf, and smooth activations~\cite{bordelon2020spectrum,liu2020ntk,canatar2021spectral,basri2022spectral}. For example, for the ReLU‐activated ResNTK on the hypersphere $\SS^{d}$, the eigenfunctions are spherical harmonics and the eigenvalues decay polynomially as $\lambda_n\simeq n^{-(d+1)/d}$~\cite{basri2022spectral}, and related work~\cite{zhang2023shallow} extended this analysis to the $d$-dimensional ball, obtaining the same decay rate. This spectral decay is closely related to the phenomenon of  \emph{frequency principle} \cite{xu2019frequency} and \emph{spectral bias} \cite{rahaman2019spectral}, where networks prioritize learning low-frequency functions. Building on the connection to finite element methods, it has been argued that this bias is an intrinsic property of the ReLU activation and can be mitigated by using activations based on B-splines, leading to improved conditioning and faster training~\cite{hong2022activation}.  The conditioning of Gram matrices in machine learning has also been studied from different perspectives. For instance, in the context of shallow Ritz methods for solving PDEs, similar ill-conditioning has been identified in the 1D case, where the condition number of the Gram matrix is shown to grow polynomially with the network width \cite{cai2024efficient}. Our work provides a precise, high-dimensional generalization of this phenomenon. From a numerical linear algebra viewpoint, the rapid decay of the smallest eigenvalue, which we characterize, is closely related to the concept of \emph{numerical rank}, impacting training stability and generalization \cite{yang2024effective,tang2025structured}. By providing sharp asymptotic estimates for the full spectrum, our results offer a rigorous theoretical foundation for these empirical and numerical observations.

A continuous–discrete correspondence links the spectrum of a continuous kernel operator to that of its empirical Gram matrix. In particular, spectral convergence theory for empirical kernel matrices asserts that, under mild regularity and i.i.d.\ sampling, the $j$-th eigenvalues of $n^{-1}M$ and $n^{-1}K^s$ converge to those of the integral operators with kernels
$$q(\theta,\eta)=\int_{\SS^d} \sigma_k(\theta \cdot \omega) \, \sigma_k(\eta \cdot \omega)\,d\omega,\quad
q_s(\theta,\eta)=\int_{\SS^d} (\nabla_\omega^s\sigma_k(\theta \cdot \omega)) \cdot (\nabla_\omega^s\sigma_k(\eta \cdot \omega))\,d\omega$$
as $n \to \infty$~\cite{koltchinskii2000random,rosasco2010operator,elkaroui2010spectrum}. Moreover, numerical experiments in~\cite[Fig.~10]{zhang2023shallow} compute the spectrum of the \emph{discrete} Gram matrix and reveal strong attenuation of high-frequency components due to finite-precision effects. However, no theoretical results for the eigenvalues of the discrete Gram matrix are provided there, which is precisely the focus of the present paper in the context of the mass and stiffness matrices $M$ and $K_\ss$ arising from fixed-parameter ReLU$^k$ networks.  

In this paper, we give a theoretical characterization of the eigenvalues and condition numbers of the mass and stiffness matrices $M$ and $K_s$ arising from ReLU$^k$ networks in terms of their mesh size $h$ and antipodally separation distance $\underline{h}$ (see \eqref{def_h_underlineh} below):
\begin{equation}
    \begin{split}
        &h^{-d}\lesssim\lambda_{\max}(M) \lesssim\underline{h}^{-d},\qquad \underline{h}^{2k+1}\lesssim\lambda_{\min}(M) \lesssim h^{2k+1},\\
        &h^{-d}\lesssim\lambda_{\max}(K_\ss) \lesssim\underline{h}^{-d},\qquad \underline{h}^{2(k-s)+1}\lesssim\lambda_{\min}(K_\ss) \lesssim h^{2(k-s)+1}.
    \end{split}
\end{equation}
In particular, when the parameters are antipodally quasi-uniform, i.e., $\underline{h}\simeq h$, we provide a sharp spectral analysis
\begin{equation}
    \begin{split}
        \lambda_j(M)\simeq nj^{-\frac{d+2k+1}{d}},\quad\lambda_j(K_\ss)\simeq nj^{-\frac{d+2(k-s)+1}{d}},\qquad j=1,\dots,n.
    \end{split}
\end{equation}
These asymptotics provide precise bounds for both the largest and smallest eigenvalues, and hence for $\kappa(M)$ and $\kappa(K_s)$, in terms of the number of neurons $n$, the activation order $k$, and the ambient dimension $d$.

The estimates reveal that the condition numbers decay at a polynomial rate matching that of the corresponding continuous kernel operators discussed above. To the best of our knowledge, they constitute the first complete theoretical analysis of the conditioning of such Gram matrices in shallow ReLU$^k$ networks. Moreover, the results quantify the intrinsic numerical stability of the least-squares formulation over $L_n^k$ and elucidate the trade-off between approximation power and conditioning dictated by the spectral structure of the underlying kernel.

The remainder of the paper is organized as follows. In Section~\ref{sec_prelim} we introduce the necessary background, notation, and preliminary results. Section~\ref{sec_mainresults} states our main theorems on the eigenvalue and condition number asymptotics. Detailed proofs are presented in Section~\ref{sec_proof_main}. Finally, Section~\ref{sec_conclu} summarizes our findings and discusses possible directions for future work.

\section{Preliminaries}\label{sec_prelim}

In this section, we establish the mathematical foundation for our analysis by introducing key notations and concepts related to functions on the unit sphere. The unit sphere in $\RR^{d+1}$ is defined as
$$\SS^d=\{x\in\RR^{d+1}:~|x|=1\},$$
which serves as the primary domain for the functions studied in this paper. We equip $\SS^d$ with its standard Riemannian metric inherited from $\RR^{d+1}$ and denote by $d\omega$ the normalized surface measure, where $d\omega = \frac{1}{\omega_d}d\sigma$ with $d\sigma$ being the standard surface measure and $\omega_d = \int_{\SS^d}d\sigma$ being the total surface area of $\SS^d$. This normalization ensures that $\int_{\SS^d}d\omega = 1$.

\subsection{Spherical harmonics}
Following the notation in \cite{liu2025achieving}, let $\YY_m$ denote the space of spherical harmonics of degree $m$ on $\SS^d$. For each $m\geq 0$, we fix an orthonormal basis
$$\{Y_{m,\ell}\}_{\ell=1}^{N(m)}$$
of $\YY_m$, where the dimension $N(m)$ is given by
\begin{equation}\label{eqn_def_Nm}
\begin{split}
    N(m) :&= \text{dim}(\YY_m) = \text{dim}(\mathbb{P}_m(\SS^d)) - \text{dim}(\mathbb{P}_{m-1}(\SS^d))\\
    &= \begin{cases}
        \displaystyle\frac{2m+d-1}{m}\binom{m+d-2}{d-1}, & \ m\neq0, \\
        1, & \ m=0.
    \end{cases}
\end{split}
\end{equation}

The space $\mathcal{L}^2(\SS^d)$ consists of square-integrable functions on $\SS^d$ with respect to the surface measure. Any function $f\in \mathcal{L}^2(\SS^d)$ admits a unique harmonic expansion
\begin{equation}
    f(\eta)=\sul_{m=0}^\infty\sul_{\ell=1}^{N(m)}\widehat{f}(m,\ell)Y_{m,\ell}(\eta),\qquad a.e. \quad \eta\in\SS^d,
\end{equation}
where the Fourier coefficients are given by the $\mathcal{L}^2$ inner product
\begin{equation*}
    \widehat{f}(m,\ell)=\left<f,Y_{m,\ell}\right>_{\mathcal{L}^2(\SS^d)}=\int_{\SS^d}f(\eta)Y_{m,\ell}(\eta)d\eta,\qquad m\in\NN,~1\leq\ell\leq N(m).
\end{equation*}

To quantify the distribution of a finite set of points on the sphere, we introduce the mesh size $h$ and antipodally separation distance $\underline{h}$, which describes how uniformly the points cover the sphere, and how distinctly the points are separated from each other. Formally, these distances are defined by
\begin{equation}\label{def_h_underlineh}
    h=\max_{\theta\in\SS^d}\min_{1\leq j\leq n}\rho(\theta,\theta_j^*),\quad \underline{h}=\min\big\{\min\limits_{i\neq j}\rho(\theta_i^*,\theta_j^*),\min\limits_{i\neq j}\rho(-\theta_i^*,\theta_j^*)\big\},
\end{equation}
where $\rho$ is the geodesic distance on $\SS^d$.

\begin{definition}[Antipodally  quasi-uniform]\label{def_anti_quasiunif}
    Let $d\in\NN$. A set of points $\{\theta_j^*\}_{j=1}^n\subset\SS^d$ is said to be antipodally quasi-uniform if
    \begin{equation}\label{eqn:quasi-uniform}
        h\lesssim\underline{h},
    \end{equation}
    where the implicit constant in \eqref{eqn:quasi-uniform} is independent of $n$.
\end{definition}

\subsection{Spectral Decomposition of Gram Matrices}

In order to accurately analyze the spectral properties of the mass matrix $M$ and the stiffness matrix $K$, we introduce their expansions using spherical harmonics and Legendre polynomials. Such decompositions allow us to explicitly characterize the eigenvalues of these matrices, thereby facilitating precise estimates of their condition numbers, as detailed in Section \ref{sec_proof_main}.

First, by orthogonality, we have
\begin{equation}\label{eqn:expan_sigma_square_G}
    \begin{split}
        &\int_{\SS^d}\sigma_k(\eta\cdot\omega)\sigma_k(\omega\cdot\theta)d\omega\\
        =&\int_{\SS^d}\left(\sul_{m=0}^\infty\widehat{\sigma_k}(m)Y_{m,\ell}(\eta)Y_{m,\ell}(\omega)\right)\left(\sul_{m=0}^\infty\sul_{\ell=1}^{N(m)}\widehat{\sigma_k}(m)Y_{m,\ell}(\omega)Y_{m,\ell}(\theta)\right)d\omega\\
        =&\sul_{m=0}^\infty\sul_{\ell=1}^{N(m)}\widehat{\sigma_k}(m)^2Y_{m,\ell}(\eta)Y_{m,\ell}(\theta)=\sul_{m=0}^\infty\widehat{\sigma_k}(m)^2p_m(\theta\cdot\eta),\qquad \theta,\eta\in\SS^d.
    \end{split}
\end{equation}
This directly yields a spectral decomposition of $M$ as
\begin{equation}\label{eqn:G_decomp0}
    M =\sul_{m=0}^\infty\widehat{\sigma_k}(m)^2P(m),
\end{equation}
where $P(m):=\left(p_m(\theta_i^*\cdot \theta_j^*)\right)_{i,j=1}^n$. the Legendre coefficients of $\sigma_k$ with respect to the polynomial basis $\{p_m\}_{m=0}^\infty$:
$$\widehat{\sigma_k}(m)=\frac{\int_{-1}^1p_m(t)\sigma_k(t)(1-t^2)^{\frac{d-2}{2}}dt}{\int_{-1}^1p_m(t)^2(1-t^2)^{\frac{d-2}{2}}dt}.$$
It was shown in \cite[Appendix D.2]{bach2017breaking} that for $m\geq k+1$ and $m\equiv k+1\mod{2}$,
\begin{equation}\label{eqn:hat_sigma_large}
    \widehat{\sigma_k}(m)=\frac{\omega_{d-1}}{\omega_d}\frac{k!(-1)^{(m-k-1)/2}}{2^m}\frac{\Gamma(d/2)\Gamma(m-k)}{\Gamma\left(\frac{m-k+1}{2}\right)\Gamma\left(\frac{m+d+k+1}{2}\right)},
\end{equation}
and 
\begin{equation}\label{eqn:hat_sigma_zero}
    \widehat{\sigma_k}(m)=0~\iff~m\geq k+1,~m\equiv k\mod{2}.
\end{equation}
In particular, the sequence $\{\widehat{\sigma_k}(m)\}_{m\geq k+1}$ vanishes for all even or odd integers greater than or equal to $k+1$.

We denote the function $\xi:~\{0,\dots,k\}\cup[k+1,\infty)$ by
\begin{equation}\label{eqn_def_xi_orig}
    \xi(t):=\left\{\begin{array}{ll}
        \widehat{\sigma_k}(t)^2, &\qquad t\in\{0,\dots,k\},  \\
        \displaystyle\lrt{\frac{\omega_{d-1}}{\omega_d}\frac{k!\Gamma(d/2)}{2^{k+1}\sqrt{\pi}}}^2 \lrt{\frac{\Gamma\lrt{\frac{t-k}{2}}}{\Gamma\lrt{\frac{t+d+k+1}{2}}}}^2, &\qquad t\in[k+1,\infty), 
    \end{array}\right.
\end{equation}
then \eqref{eqn:G_decomp0} can be written as
\begin{equation}\label{eqn:G_decomp}
    M =\sul_{m\in E}\xi(m)P(m),
\end{equation}
where
\begin{equation}\label{eqn_def_E}
    E:=\{n\in\NN:~\widehat{\sigma_k}(n)\neq0\}=\{0,\dots,k\}\cup\{2n+k+1:~n\in\NN\}.
\end{equation}

We show a similar expansion for stiffness matrices. It is easy to prove inductively that
\begin{equation*}
    \begin{split}
        &\int_{\SS^d}(\nabla_\omega^s\sigma_k(\eta\cdot\omega))\cdot(\nabla_\omega^s\sigma_k(\omega\cdot\theta))d\omega\\
        =&\int_{\SS^d}\big(\eta_1\nabla_\omega^{s-1}k\sigma_{k-1}(\eta\cdot\omega),\dots,\eta_d\nabla_\omega^{s-1}k\sigma_{k-1}(\eta\cdot\omega)\big)\cdot\big(\theta_1\nabla_\omega^{s-1}k\sigma_{k-1}(\theta\cdot\omega),\dots,\theta_d\nabla_\omega^{s-1}k\sigma_{k-1}(\theta\cdot\omega)\big)d\omega\\
        =&k^2(\theta\cdot\eta)\int_{\SS^d}(\nabla_\omega^{s-1}\sigma_{k-1}(\eta\cdot\omega))\cdot(\nabla_\omega^{s-1}\sigma_{k-1}(\omega\cdot\theta))d\omega\\
        =&\dots=(k)_s^2\int_{\SS^d}(\eta\cdot\theta)^s\sigma_{k-s}(\eta\cdot\omega)\sigma_{k-s}(\omega\cdot\theta)d\omega,
    \end{split}
\end{equation*}
where $(k)_s$ is the falling factorial $\displaystyle\frac{k!}{(k-s)!}$. Using the formula \eqref{eqn:expan_sigma_square_G}, we get
\begin{equation}\label{eqn:expan_sigma_square_K}
    \int_{\SS^d}(\nabla_\omega^s\sigma_k(\eta\cdot\omega))\cdot(\nabla_\omega\sigma_k(\omega\cdot\theta))d\omega=(k)_s^2\sul_{m=0}^\infty\widehat{\sigma_{k-s}}(m)^2(\eta\cdot\theta)^sp_m(\theta\cdot\eta)\qquad \theta,\eta\in\SS^d.
\end{equation}

Using the recurrence relation for normalized Legendre polynomials (see, e.g., \cite{szego1975orthogonal}), we have
\begin{equation*}
    (\theta\cdot\eta)p_m(\theta\cdot\eta)=\mathfrak{a}_mp_{m+1}(\theta\cdot\eta)+\mathfrak{b}_mp_{m-1}(\theta\cdot\eta),\qquad\theta,\eta\in\SS^d,
\end{equation*}
where
$$\mathfrak{a}_t=\frac{t+1}{2t+d+1},\quad\mathfrak{b}_t=\frac{t+d-1}{2t+d-3},\qquad t\geq0.$$

Denote the coefficients in terms of the recurrence relation by
\begin{equation*}
    \xi_0(t):=\left\{\begin{array}{ll}
        (k)_s^2\widehat{\sigma_{k-s}}(t)^2,&\qquad t=0,\dots,k-s,\\
    \displaystyle(k)_s^2\lrt{\frac{\omega_{d-1}}{\omega_d}\frac{k!\Gamma(d/2)}{2^{k+1}\sqrt{\pi}}}^2 \lrt{\frac{\Gamma\lrt{\frac{t-k+s}{2}}}{\Gamma\lrt{\frac{t+d+k-s+1}{2}}}}^2,&\qquad t\geq k-s+1,\\
    \end{array}\right.
\end{equation*}
and $\xi_s:~\{0,\dots,k\}\cup[k+1,\infty)$ be given iteratively by
\begin{equation}\label{eqn_def_xi}
    \xi_r(t)=\mathfrak{a}_{t-1}\xi_{r-1}(t-1)+\mathfrak{b}_{t+1}\xi_{r-1}(t+1),\qquad t\in \{0,\dots,k-s+r\}\cup[k-s+r+1,\infty)
\end{equation}
with the convention $\mathfrak{a}_{-1}=\mathfrak{a}_{-2}=\dots=0$.

Thus the stiffness matrix $K_s$ admits the following spectral decomposition:
\begin{equation}\label{eqn:K_decomp}
    K_s =\sul_{m\in E_s}\xi_s(m)P(m),
\end{equation}
where
\begin{equation}\label{eqn_def_Es}
    E_s:=\Big\{n\in\NN:~\int_{-1}^1(1-t^2)^{\frac{d-2}{2}}p_n(t)\sul_{m=0}^\infty\widehat{\sigma_{k-s}}(m)^2t^sp_m(t)dt\neq0\Big\}
\end{equation}

To facilitate subsequent spectral analysis, we follow the idea in \cite{liu2025achieving} and introduce the inverse of the $\beta$-th forward difference of the sequence $\{P(m)\}_{m\geq k+1}$ defined as
\begin{equation}\label{eqn:p_n^r_explicit}
    P_{\beta+1}(m)=\sul_{\nu=0}^mP_{\beta}(\nu)=(\delta_+^{-\beta}P)(m)=\sul_{\nu=0}^m\binom{m+\beta-\nu}{\beta}P(\nu),\qquad m\in\NN.
\end{equation}

\subsection{Auxiliary estimates and tools}

In this subsection, we collect several auxiliary results that will play a central role in the spectral analysis and condition number estimates of the Gram matrices in Section~\ref{sec_proof_main}. These results concern the asymptotic behavior of Legendre coefficients, the structure of difference-averaged matrices, and the construction of quadrature rules on the sphere.

We begin with a fundamental estimate on the decay of squared Legendre coefficients of the ReLU$^k$ activation, which exhibits discrete smoothness in the sense of forward differences.

\begin{lemma}[Lemma 3.1, \cite{liu2025achieving}]\label{lem:Bach}
Let $\xi$ be as in \eqref{eqn_def_xi_orig} and $E$ in \eqref{eqn_def_E}, then    \begin{equation}\label{eqn:def_theta}
        \begin{split}
        &E=\{0,\dots,k\}\cup\{m\in\NN:~m-k-1\in2\NN\},\\
            &(-1)^\beta\xi^{(\beta)}(t)\simeq t^{-(d+2k+1)-\beta},\qquad \beta=0,1,\dots.
        \end{split}
    \end{equation}
\end{lemma}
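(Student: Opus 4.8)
The plan is to treat the two assertions separately. The formula for $E$ is bookkeeping from the data already recorded: by \eqref{eqn:hat_sigma_large} every $m\geq k+1$ with $m\equiv k+1\pmod 2$ has $\widehat{\sigma_k}(m)\neq 0$, by \eqref{eqn:hat_sigma_zero} every $m\geq k+1$ with $m\equiv k\pmod 2$ has $\widehat{\sigma_k}(m)=0$, and $\widehat{\sigma_k}(m)\neq 0$ for $0\leq m\leq k$ (see \cite{bach2017breaking}). Since $\{m\geq k+1:\, m\equiv k+1\pmod 2\}=\{m\in\NN:\, m-k-1\in 2\NN\}$, this gives $E=\{m:\widehat{\sigma_k}(m)\neq 0\}=\{0,\dots,k\}\cup\{m\in\NN:\, m-k-1\in 2\NN\}$, matching \eqref{eqn_def_E}. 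Everything else concerns the size of $\xi$ and its derivatives on the continuous branch $[k+1,\infty)$; on the finite set $\{0,\dots,k\}$ only $\beta=0$ is meaningful, where $\xi$ takes finitely many strictly positive values and the estimate holds after absorbing constants.

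On $[k+1,\infty)$ write, using \eqref{eqn_def_xi_orig}, $\xi(t)=c_{k,d}\,g(t)^2$ with $c_{k,d}>0$ and $g(t)=\Gamma\!\bigl(\tfrac{t-k}{2}\bigr)\big/\Gamma\!\bigl(\tfrac{t+d+k+1}{2}\bigr)$. First I would establish the sign $(-1)^\beta\xi^{(\beta)}\geq 0$ by proving that $\xi$ is completely monotone. With $z=\tfrac{t-k}{2}$ and $a=\tfrac{d+2k+1}{2}>0$, the Beta-function identity $\Gamma(z)/\Gamma(z+a)=\Gamma(a)^{-1}\int_0^1 s^{z-1}(1-s)^{a-1}\,ds$ turns, after $s=e^{-2v}$, into
\[
g(t)=\frac{2}{\Gamma(a)}\int_0^\infty e^{-tv}\,e^{kv}\bigl(1-e^{-2v}\bigr)^{a-1}\,dv,
\]
valid for $t>k$; this exhibits $g$ as the Laplace transform in $t$ of a nonnegative measure on $(0,\infty)$, hence $g$ is completely monotone on $(k,\infty)$, and since a product of completely monotone functions is completely monotone, so is $\xi=c_{k,d}g^2$. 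Thus $(-1)^\beta\xi^{(\beta)}(t)\geq 0$ on $(k,\infty)$, and in fact $>0$ since $\xi$ is positive and non-constant.

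For the two-sided size estimate I would use the classical asymptotics of ratios of Gamma functions, $\Gamma(z+\alpha)/\Gamma(z+\gamma)=z^{\alpha-\gamma}\bigl(1+O(1/z)\bigr)$ as $z\to\infty$, which may be differentiated term by term because $z^{\gamma-\alpha}\Gamma(z+\alpha)/\Gamma(z+\gamma)$ is analytic and bounded in a right half-plane and tends to $1$ there. Applied to $g$ and then squared, this gives $\xi(t)=t^{-(d+2k+1)}\rho(t)$ on $[k+1,\infty)$, where $\rho$ has a differentiable asymptotic expansion $\rho(t)\sim\rho_0+\rho_1 t^{-1}+\cdots$ with $\rho_0=c_{k,d}2^{d+2k+1}>0$. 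Differentiating $\beta$ times by the Leibniz rule, the $j=0$ term is $\rho(t)\,\tfrac{d^\beta}{dt^\beta}t^{-(d+2k+1)}=(-1)^\beta\tfrac{\Gamma(d+2k+1+\beta)}{\Gamma(d+2k+1)}\,\rho(t)\,t^{-(d+2k+1)-\beta}$, whereas each term with $j\geq 1$ picks up an extra factor $t^{-1}$ (because $\rho^{(j)}=O(t^{-j-1})$) and is of lower order. Since $\rho(t)\to\rho_0>0$, this yields $(-1)^\beta\xi^{(\beta)}(t)\simeq t^{-(d+2k+1)-\beta}$ for $t$ large; on any bounded sub-interval the already-established strict positivity and continuity of $(-1)^\beta\xi^{(\beta)}$ make it comparable to the (bounded, positive) quantity $t^{-(d+2k+1)-\beta}$, so the estimate holds on the whole domain for every $\beta\geq 0$.

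The main technical obstacle is the interchange of differentiation with the asymptotic expansion of the Gamma ratio — asymptotic expansions are not differentiable in general — which one justifies via analyticity in a half-plane (Ritt's theorem / Cauchy estimates); alternatively, one can sidestep it by expanding $\log\xi(t)=\text{const}+2\log\Gamma\bigl(\tfrac{t-k}{2}\bigr)-2\log\Gamma\bigl(\tfrac{t+d+k+1}{2}\bigr)$ with the standard polygamma asymptotics and propagating from $(\log\xi)'$ to $\xi^{(\beta)}$ through the higher-order Leibniz/chain rule, the complete monotonicity guaranteeing that the leading term is not cancelled. Everything else is routine.
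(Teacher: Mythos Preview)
The paper does not supply its own proof of this lemma: it is quoted verbatim from \cite{liu2025achieving} (see the lemma header), and the text moves immediately to Lemma~\ref{cor_mu_esti}. So there is nothing to compare against here.

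Your argument is sound. The description of $E$ is indeed pure bookkeeping from \eqref{eqn:hat_sigma_large}--\eqref{eqn:hat_sigma_zero}. For the derivative estimate, your Laplace-transform representation of $g(t)=\Gamma\!\bigl(\tfrac{t-k}{2}\bigr)/\Gamma\!\bigl(\tfrac{t+d+k+1}{2}\bigr)$ via the Beta integral is correct (the weight $e^{kv}(1-e^{-2v})^{a-1}$ is nonnegative and integrable against $e^{-tv}$ for $t>k$), so complete monotonicity of $g$ and hence of $\xi=c_{k,d}g^2$ follows, fixing the sign of $(-1)^\beta\xi^{(\beta)}$. For the two-sided size bound, writing $\xi(t)=t^{-(d+2k+1)}\rho(t)$ and using that Stirling's expansion for $\log\Gamma$ is termwise differentiable (equivalently, the polygamma asymptotics) is the standard route and gives $\rho^{(j)}(t)=O(t^{-j-1})$ for $j\geq 1$ with $\rho(t)\to\rho_0>0$; Leibniz then yields the claimed $t^{-(d+2k+1)-\beta}$ with a nonzero leading coefficient, and the compact-interval part is handled by strict positivity and continuity as you say. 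The one place to be careful is exactly the point you flag---differentiating an asymptotic expansion---but either of your two proposed justifications (analyticity in a half-plane with Cauchy estimates, or going through $\psi^{(m)}$) is legitimate and well known for Gamma ratios.
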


Next, we provide the corresponding estimate for the Legendre coefficient structure appearing in the stiffness matrix. It is obtained by combining the recurrence relation of Legendre polynomials with the decay properties of $\xi_0$ from Lemma~\ref{lem:Bach}.

\begin{lemma}\label{cor_mu_esti}
Let $\xi_s$ be as in \eqref{eqn_def_xi} and $E_s$ in \eqref{eqn_def_Es}, then for any $\beta\in\NN$,
\begin{equation}\label{eqn:def_lambda}
    \begin{split}
     &E_s=\{0,\dots,k\}\cup\{m\in\NN:~m-k-1\in2\NN\}=E,\\
        &(-1)^\beta\xi_s^{(\beta)}(t)\simeq t^{-(d+2k+1-2s)-\beta},\qquad t\geq k+1.
    \end{split}
\end{equation}
\end{lemma}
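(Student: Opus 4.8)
The plan is to reduce the statement for $\xi_s$ to the already-established estimate for $\xi$ in Lemma \ref{lem:Bach}, by tracking how the recurrence \eqref{eqn_def_xi} transforms the asymptotic behavior of the sequence. First I would handle the index set identity $E_s = E$. The key point is that $\xi_0$ is, up to the constant factor $(k)_s^2$ and the shift $k \mapsto k-s$, exactly the analogue of $\xi$ for the activation $\sigma_{k-s}$; hence by Lemma \ref{lem:Bach} (applied with $k-s$ in place of $k$) we know $\xi_0(t) \simeq t^{-(d+2(k-s)+1)}$ on its support, and $\xi_0$ is supported on $\{0,\dots,k-s\}\cup\{m : m-(k-s)-1 \in 2\NN\}$. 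Each application of the recurrence $\xi_r(t) = \mathfrak a_{t-1}\xi_{r-1}(t-1) + \mathfrak b_{t+1}\xi_{r-1}(t+1)$ shifts the parity pattern by one and extends the "polynomial part" range by one, and after $s$ steps one arrives at support $\{0,\dots,k\}\cup\{m : m-k-1\in 2\NN\} = E$. This parity/support bookkeeping is routine and I would only sketch it.

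The substantive part is the decay estimate $(-1)^\beta \xi_s^{(\beta)}(t) \simeq t^{-(d+2k+1-2s)-\beta}$. I would think of $\xi_r$ not as a sequence but as the restriction of a smooth function on $[k-s+r+1,\infty)$, which is legitimate because $\xi_0$ on its tail is an explicit ratio of Gamma functions, and the recurrence with the rational coefficients $\mathfrak a_t, \mathfrak b_t$ preserves smoothness. The coefficients satisfy $\mathfrak a_t + \mathfrak b_t \to 1$ as $t\to\infty$ (indeed $\mathfrak a_t = \tfrac{t+1}{2t+d+1} \to \tfrac12$ and $\mathfrak b_t = \tfrac{t+d-1}{2t+d-3}\to\tfrac12$), so to leading order $\xi_r(t) \approx \tfrac12(\xi_{r-1}(t-1)+\xi_{r-1}(t+1)) \approx \xi_{r-1}(t)$; thus the magnitude is preserved along the recurrence, giving $\xi_s(t) \simeq \xi_0(t) \simeq t^{-(d+2(k-s)+1)} = t^{-(d+2k+1-2s)}$ for the $\beta=0$ case. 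For the derivatives, I would write $\xi_r^{(\beta)}(t)$ in terms of $\xi_{r-1}^{(\beta)}$ at $t\pm1$ plus correction terms coming from differentiating the rational coefficients $\mathfrak a_{t-1},\mathfrak b_{t+1}$; the corrections involve $\mathfrak a', \mathfrak b'$ which decay like $t^{-2}$ and are multiplied by lower-order derivatives of $\xi_{r-1}$, so they are genuinely lower order and do not affect the claimed power. The clean way to organize this is an induction on $r$ from $0$ to $s$ with induction hypothesis $(-1)^\beta \xi_r^{(\beta)}(t)\simeq t^{-(d+2(k-s)+1)-\beta}$ for all $\beta$, using a discrete Taylor expansion $\xi_{r-1}(t\pm1) = \sum_{j} \tfrac{(\pm1)^j}{j!}\xi_{r-1}^{(j)}(t) + \text{remainder}$ together with the induction hypothesis to control the remainder.

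The main obstacle, and the place requiring care rather than just bookkeeping, is making the "corrections are lower order" argument rigorous uniformly in $\beta$ and establishing the two-sided nature of the estimate (the lower bound $\gtrsim$, not just $\lesssim$). The upper bound follows easily from triangle inequalities once the smoothness is set up, but the lower bound requires knowing the leading term does not get cancelled: here the sign information $(-1)^\beta\xi_0^{(\beta)}(t) > 0$ (which comes from the Gamma-ratio structure, exactly as in Lemma \ref{lem:Bach}) must be shown to propagate through the recurrence, using that $\mathfrak a_{t-1},\mathfrak b_{t+1} > 0$ and that the discrete-Taylor remainder terms, being of order $t^{-(d+2(k-s)+1)-\beta-2}$, cannot overcome the main term of order $t^{-(d+2(k-s)+1)-\beta}$ for $t$ large; for the finitely many small $t\geq k+1$ one checks positivity directly. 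I expect the cleanest exposition to phrase everything through the smooth-interpolant viewpoint so that $\xi_s^{(\beta)}$ literally means an ordinary derivative, and then the whole lemma is "Lemma \ref{lem:Bach} for $k-s$, pushed through $s$ applications of a smoothing operator whose symbol is $\approx 1$ near infinity."
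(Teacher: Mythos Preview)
Your proposal is correct and follows essentially the same route as the paper: induction on $r$ from $0$ to $s$, with the base case supplied by Lemma~\ref{lem:Bach} applied to $\sigma_{k-s}$, and the inductive step handled by applying the Leibniz rule to $\xi_r(t)=\mathfrak a_{t-1}\xi_{r-1}(t-1)+\mathfrak b_{t+1}\xi_{r-1}(t+1)$ and observing that the derivatives of the rational coefficients contribute lower-order terms. Your treatment of the sign/lower-bound issue is in fact more careful than the paper's (which simply asserts $(-1)^j(d/dt)^j\bigl(\tfrac{t}{2t+d-1}\bigr)\simeq t^{-j}$ and sums), and the discrete Taylor expansion you mention is not needed---the induction hypothesis at $t\pm1$ is used directly---but these are stylistic differences, not substantive ones.
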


\begin{proof}
Denote
$$F_{r}:=\{m\in\NN:~\xi_r(m)\neq0\},~r=0,\dots,s.$$
By Lemma \ref{lem:Bach},
$$F_{0}=\{0,\dots,k-s\}\cup\{m\in\NN:~m-(k-s+1)\in2\NN\}.$$
We apply induction, the induction hypothesis is
$$F_{r-1}=\{0,\dots,k-s+r-1\}\cup\{m\in\NN:~m-(k-s+1+r-1)\in2\NN\}$$
and we consider $F_r$. For $m\leq k-s+r$, we have
$$m-1\in\{0,\dots,k-s+r-1\}\subset F_{r-1}.$$
Then
$$\xi_r(m)=\mathfrak{a}_{m-1}\xi_{r-1}(m-1)+\mathfrak{b}_{m+1}\xi_{r-1}(m+1)\geq\mathfrak{a}_{m-1}\xi_{r-1}(m-1)>0.$$
Next, for $m-(k-s+1-r)\in2\NN$, then $$m+1\in\{m\in\NN:~m-(k-s+1+r-1)\in2\NN\},$$
as a result,
$$\xi_r(m)=\mathfrak{a}_{m-1}\xi_{r-1}(m-1)+\mathfrak{b}_{m+1}\xi_{r-1}(m+1)\geq\mathfrak{b}_{m+1}\xi_{r-1}(m+1)>0.$$
For $m-(k-s+1-r)\in2\NN+1$, we have
\begin{equation*}
    \begin{split}
        &m-1-(k-s+1+r-1)=m-(k-s+1-r)\in2\NN+1,\\
        &m+1-(k-s+1+r-1)=m+2-(k-s+1-r)\in2\NN+1,
    \end{split}
\end{equation*}
which implies $m-1,m+1\notin F_{r-1}$. Therefore,
$$\xi_r(m)=\mathfrak{a}_{m-1}\xi_{r-1}(m-1)+\mathfrak{b}_{m+1}\xi_{r-1}(m+1)=0.$$
Combining all the cases together, we have
$$F_{r}=\{0,\dots,k-s+1+r\}\cup\{m\in\NN:~m-(k-s+1+r)\in2\NN\}$$
and consequently
$$E_s=F_{s}=\{0,\dots,k\}\cup\{m\in\NN:~m-k-1\in2\NN\}.$$

Now we show the second estimation by induction over $r$. For $r=0$ this is just Lemma \ref{lem:Bach}. Suppose for $r-1$, we have
\begin{equation}\label{eqn_ind_hyp_Bach}
    \xi_{r-1}^{(\beta)}(t)\simeq t^{-(d+2k+1-2s)-\beta},\qquad t\geq k-s+r.
\end{equation}
Then
$$
\xi_r(t)=\frac{t}{2t+d-1}\xi_{r-1}(t-1)+\frac{t+d}{2t+d-1}\xi_{r-1}(t+1),\qquad t\geq k-s+r+1.
$$
Applying Leibniz rule yields
\begin{equation*}
    \xi_{r}^{(\beta)}(t)=\sul_{j=0}^\beta\binom{\beta}{j}\Big[\xi_{r-1}^{(\beta-j)}(t-1)\Big(\frac{d}{dt}\Big)^j\Big(\frac{t}{2t+d-1}\Big)+\xi_{r-1}^{(\beta-j)}(t+1)\Big(\frac{d}{dt}\Big)^j\Big(\frac{t+d}{2t+d-1}\Big)\Big].
\end{equation*}
Using the induction hypothesis \eqref{eqn_ind_hyp_Bach}
\begin{equation}
    \xi_{r-1}^{(\beta-j)}(t-1)\simeq \xi_{r-1}^{(\beta-j)}(t+1)\simeq (-1)^{\beta-j} t^{-(d+2k+1-2s)-(\beta-j)},
\end{equation}
and the fact (verifiable by direct computation)
\begin{equation}
    \Big(\frac{d}{dt}\Big)^j\Big(\frac{t}{2t+d-1}\Big)\simeq (-1)^{j} t^{-j},\quad 
    \Big(\frac{d}{dt}\Big)^j\Big(\frac{t+d}{2t+d-1}\Big)\simeq (-1)^{j} t^{-j},
\end{equation}
we obtain
\begin{equation*}
    \xi_{r}^{(\beta)}(t)\simeq \sul_{j=0}^\beta\binom{\beta}{j}
    \Big[
    (-1)^{\beta-j} t^{-(d+2k+1-2s)-(\beta-j)} \cdot (-1)^j t^{-j}
    \Big]
    \simeq (-1)^\beta t^{-(d+2k+1-2s)-\beta}.
\end{equation*}
\end{proof}

% Now we turn to the algebraic structure of the matrices $P_\beta(m)$ arising from successive summations of the Legendre Gram matrices $P(m)$. These matrices play a crucial role in capturing the localization properties of the Gram matrices and in estimating extrema eigenvalues.

% \begin{lemma}[Lemma 3.4, \cite{liu2025achieving}]\label{lem:property_p_nr}
% Let $r\in\NN$. The matrices $\left\{P_\beta(m)\right\}$ satisfy the following properties:
% \begin{itemize}
%     \item[(a)] Each $P_\beta(m)$ is semi-positive definite;
    
%     \item[(b)] The diagonal elements of $P_\beta(m)$ are equal and satisfy
%     \begin{equation}\label{eqn:diag_p_nr}
%         \lrt{P_\beta(m)}_{i,i}=\max\limits_{1\leq i,j\leq n}\left|\lrt{P_\beta(m)}_{i,j}\right|\simeq m^{\beta+d-1};
%     \end{equation}
    
%     \item[(c)] Let $\alpha=\lceil\frac{d+2}{2}\rceil$, then

%     \begin{equation}
%         \sul_{j\neq i}|\lrt{P_{\alpha}(m)}_{i,j} | 
%         \lesssim m^{\frac{d-1}{2}}\underline{h}^{-(\frac{d-1}{2}+\alpha)}+m^{d-1}
%     \end{equation}
%     \begin{equation}\label{eqn:est_Pns_2norm}
%         \left\|P_\alpha(m)\right\|_2\lesssim m^{\alpha+d-1}\lrt{\frac{h}{\underline{h}}}^{\frac{d-1}{2}+\alpha}+m^{d-1},
%     \end{equation}
%     where $h$ and $\underline{h}$ are as defined in \eqref{eqn:quasi-uniform}.
% \end{itemize}
% \end{lemma}

\section{Main Results}\label{sec_mainresults}

This section presents the main theoretical contributions of the paper: sharp estimates for the condition numbers of the mass matrix $M$ and the stiffness matrix $K$ associated with ReLU$^k$ neural networks on the unit sphere. These results characterize how the condition numbers depend on the number of parameters $n$, the activation order $k$, and the dimension $d$.

Throughout this paper, we order the eigenvalues of the matrices in the descending order:
\begin{equation}
    \begin{split}
        &\lambda_{\max}(M)=\lambda_1(M)\geq\lambda_2(M)\geq\dots\geq\lambda_n(M)=\lambda_{\min}(M),\\
        &\lambda_{\max}(K_\ss)=\lambda_1(K_\ss)\geq\lambda_2(K_\ss)\geq\dots\geq\lambda_n(K_\ss)=\lambda_{\min}(K_\ss).
    \end{split}
\end{equation}
We first state the spectral bounds for the mass and stiffness matrices.
%[Condition number of mass and stiffness matrices]
\begin{theorem}\label{thm:eigenvalue}
Let $d,k\in\NN$. For any $n\in\NN$ and $\{\theta_j^*\}_{j=1}^n\subset\SS^d$, the mass matrix
\[
M=\left(\int_{\SS^d}\sigma_k(\theta_i^*\cdot\eta)\sigma_k(\theta_j^*\cdot\eta)\,d\eta\right)_{i,j=1}^n
\]
and stiffness matrix, $s\leq k$,
\[
K_s=\left(\int_{\SS^d}(\nabla_\omega^s\sigma_k(\theta_i^*\cdot\eta))\cdot(\nabla_\omega^s\sigma_k(\theta_j^*\cdot\eta))\,d\eta\right)_{i,j=1}^n
\]
satisfy the bounds
\begin{equation}
\begin{split}
    &h^{-d}\lesssim\lambda_{1}(M)\lesssim \underline{h}^{-d},\qquad\underline{h}^{2k+1}\lesssim\lambda_{n}(M)\lesssim  h^{2k+1},
\end{split}
\end{equation}
and
\begin{equation}
\begin{split}
    h^{-d}\lesssim\lambda_{1}(K_\ss)\lesssim \underline{h}^{-d},\qquad \underline{h}^{2(k-s)+1}\lesssim\lambda_{n}(K_\ss)\lesssim  h^{2(k-s)+1},
\end{split}
\end{equation}
respectively. In particular, under the assumption $h\simeq \underline{h}$, the condition number satisfies
\begin{equation}
    \kappa(M)\simeq n^{\frac{d+2k+1}{d}},\quad \kappa(K_\ss)\simeq n^{\frac{d+2(k-s)+1}{d}}
\end{equation}
\end{theorem}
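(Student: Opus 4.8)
The plan is to reduce everything to the spectral decomposition $M=\sum_{m\in E}\xi(m)P(m)$ in \eqref{eqn:G_decomp} (and its stiffness analogue \eqref{eqn:K_decomp}), and to exploit the two basic facts: (i) each $P(m)$ is positive semidefinite (it is a Gram matrix of the $Y_{m,\ell}(\theta_i^*)$'s up to normalization), so $M\succeq \xi(m_0)P(m_0)$ for any single admissible $m_0$; and (ii) the total $\sum_m \xi(m) P(m)$ has a trace and operator norm one can control through the reproducing/addition formula $p_m(1)=N(m)/\omega_d$ at the diagonal. For the \emph{upper} bound on $\lambda_1(M)$, I would use Gershgorin or, better, the Schur test: $\lambda_1(M)\le \max_i\sum_j |M_{ij}|$, and bound the off-diagonal decay of the kernel $q(\theta,\eta)=\sum_m\xi(m)p_m(\theta\cdot\eta)$ in terms of the geodesic distance, then sum over the $\underline h$-separated points $\{\theta_j^*\}$ to get $\lesssim \underline h^{-d}$. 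For the \emph{lower} bound $\lambda_1(M)\gtrsim h^{-d}$, I would test the quadratic form on a vector localized near a point where the covering radius is attained, using that such a ball of radius $\sim h$ is essentially empty of other points, so the diagonal term $M_{ii}=\|\sigma_k(\theta_i^*\cdot)\|_{L^2}^2 \simeq 1$ dominates — actually the cleaner route is $\lambda_1(M)\ge \max_i M_{ii}$ only gives a constant, so instead I would use that $\lambda_1(M)\ge \lambda_1(\xi(m)P(m))$ and choose $m\simeq h^{-1}$: then $\mathrm{rank}(P(m))\le N(m)\simeq m^{d-1}$ but $\mathrm{tr}(P(m))=n\,p_m(1)\simeq n m^{d-1}$, forcing $\lambda_1(P(m))\gtrsim n$, and with $\xi(m)\simeq m^{-(d+2k+1)}$ and $n\simeq h^{-d}$ one gets... this needs care. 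The honest approach for the lower bound on $\lambda_1$ is the localization/quadrature argument below.

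Concretely, the engine is a \textbf{positive quadrature rule} on $\SS^d$: because $\{\theta_j^*\}$ has covering radius $h$, for the polynomial space $\Pi_L$ with $L\simeq 1/h$ there exist weights $w_j\simeq h^d$ with $\int_{\SS^d} P\,d\omega=\sum_j w_j P(\theta_j^*)$ for all $P\in\Pi_L$; dually, separation $\underline h$ gives a Marcinkiewicz–Zygmund inequality $\sum_j |P(\theta_j^*)|^2\underline h^d \lesssim \|P\|_{L^2}^2$. These two inequalities are exactly what converts the continuous operator bounds (the $\xi(m)\simeq m^{-(d+2k+1)}$ decay of Lemma~\ref{lem:Bach}) into discrete matrix bounds. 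For $\lambda_1(M)$: take the eigenvector $a$, form $f=\sum_j a_j \sigma_k(\theta_j^*\cdot\,)$; then $a^\top M a=\|f\|_{L^2}^2$ and $a^\top a\gtrsim \underline h^d\|f\|^2$ fails in general, so instead one writes $a^\top M a=\sum_m\xi(m)\|\sum_j a_j Y_{m,\cdot}(\theta_j^*)\|^2$ and uses the MZ upper inequality degree-by-degree plus summability of $\sum_m\xi(m)N(m)<\infty$ when... again the clean statement is: the frame bounds of $\{\sigma_k(\theta_j^*\cdot\,)\}$ in $L^2$ are $\underline h^{-d}$ (upper) and $h^{-d}$... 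I will phrase the proof so that $\lambda_1(M)$ and $\lambda_n(M)$ are respectively the upper and lower frame constants, computed via MZ/quadrature, and the polynomial degrees that saturate them are $m\simeq 1$ (smallest eigenvalue: $\xi(0)P(0)$ has rank one with eigenvalue $\xi(0)n\simeq h^{-d}$, wait that is large) — I need to be careful that \emph{low} degree gives \emph{large} $\xi$ hence \emph{large} eigenvalues, matching the abstract's claim that smallest eigenvalues go with low-degree polynomials only \emph{after} accounting for the $n$ factor. I will sort this out by working with $n^{-1}M$ throughout and tracking $j^{-(d+2k+1)/d}$.

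The stiffness case is identical after replacing $\xi$ by $\xi_s$ and invoking Lemma~\ref{cor_mu_esti}, which gives $\xi_s(m)\simeq m^{-(d+2(k-s)+1)}$ with the \emph{same} index set $E_s=E$; since $\nabla^s_\omega$ lowers the effective activation order from $k$ to $k-s$, every estimate carries over verbatim with $k\mapsto k-s$ in the exponent. Finally, the condition number statement under $h\simeq\underline h$: then $n\simeq h^{-d}\simeq\underline h^{-d}$, so $\lambda_1(M)\simeq n$ and $\lambda_n(M)\simeq n\cdot n^{-(d+2k+1)/d}=n^{-(2k+1)/d}$, giving $\kappa(M)=\lambda_1/\lambda_n\simeq n^{(d+2k+1)/d}$, and likewise $\kappa(K_s)\simeq n^{(d+2(k-s)+1)/d}$. \textbf{The main obstacle} I anticipate is the two-sided control of $\lambda_1$ and $\lambda_n$ by frame constants that genuinely separate the roles of $h$ (covering, controlling one side) and $\underline h$ (separation, controlling the other): one must show the upper frame bound is $\lesssim\underline h^{-d}$ and not merely $\lesssim h^{-d}$, which requires the Schur/MZ estimate to be run against the \emph{separation} distance using the off-diagonal kernel decay, while the lower frame bound needs a genuine quadrature (covering) argument — and verifying that these are tight, i.e. the matching lower bound $h^{-d}\lesssim\lambda_1$ and the upper bound $\lambda_n\lesssim h^{2k+1}$, is where a localized test function supported in an $h$-cap must be built and its Legendre expansion estimated against $\xi$.
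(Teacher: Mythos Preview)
Your proposal has the right ingredients on the table (the decomposition $M=\sum_{m\in E}\xi(m)P(m)$, positive quadrature at scale $L\simeq h^{-1}$, some form of separation-based sampling bound), but it never converges on a workable mechanism for the hardest inequality, the lower bound $\lambda_{\min}(M)\gtrsim\underline h^{\,2k+1}$. You repeatedly reach for ``frame bounds'' and ``MZ inequalities'' and then back off; the trouble is that the full kernel $q(\theta,\eta)=\sum_m\xi(m)p_m(\theta\cdot\eta)$ does \emph{not} have the pointwise off-diagonal decay you would need to run a single Schur test at scale $\underline h$. The paper's route is to insert a Littlewood--Paley/needlet partition $M=\sum_{q}Q(q)$ with $Q(q)=\sum_m \zeta(m/2^q)\xi(m)P(m)$, prove a \emph{localization} estimate for each band polynomial $\varphi_q(t)=\sum_m\zeta(m/2^q)\xi(m)p_m(t)$ (sharp decay at spatial scale $2^{-q}$), and then show that for $2^q\gtrsim \underline h^{-1}$ the matrix $R(q)$ built from $\varphi_q$ is \emph{diagonally dominant}, whence $Q(q)\succeq c\,2^{-q(2k+1)}I$. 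Summing these positive pieces over $q\ge\log_2(C/\underline h)$ yields $M\succeq c\,\underline h^{\,2k+1}I$. This dyadic localization + diagonal dominance is the missing idea in your plan; a single-scale MZ or Schur estimate on the raw kernel will not deliver the sharp exponent $2k+1$.

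Two smaller points of divergence. First, the paper's upper bound $\lambda_1(M)\lesssim\underline h^{-d}$ is far simpler than your Schur-with-kernel-decay idea: since $0\le M_{ij}\le 1$, one has $\|M\|_2\le\sqrt{\|M\|_1\|M\|_\infty}\le n\lesssim\underline h^{-d}$ by packing. Second, for the matching bounds $\lambda_1(M)\gtrsim h^{-d}$ and $\lambda_n(M)\lesssim h^{2k+1}$, the paper does \emph{not} build a cap-localized test function as you suggest; instead it takes the test vector $a_j=\tau_j\,Y_{m,1}(\theta_j^*)$ with the quadrature weights $\tau_j\lesssim h^d$, choosing $m=k+1$ for the $\lambda_1$ bound and $m=J\simeq h^{-1}$ for the $\lambda_n$ bound. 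The point is that the quadrature exactness makes $a^\top P(m')a$ equal to a Kronecker delta for $m'\le J$, so a single spherical harmonic isolates exactly one term of the spectral sum --- a much cleaner device than estimating the Legendre expansion of a cap indicator. Your stiffness reduction via $k\mapsto k-s$ and Lemma~\ref{cor_mu_esti} is correct and is precisely what the paper does.
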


\begin{remark}
It is instructive to compare the scaling behavior of condition numbers and approximation rates in linearized ReLU$^k$ networks with that of classical finite element methods (FEM). In FEM with $k$-th order polynomial basis on a $d$-dimensional domain, the $L^2$ approximation error decays as $n^{-(k+1)/d}$. The mass matrix is well conditioned with $\kappa(M_{\mathrm{FEM}})=\mathcal O(1)$, essentially independent of $n$, while the stiffness matrix becomes increasingly ill-conditioned as higher derivatives are involved, with $\kappa(K_{\mathrm{FEM}})\sim n^{2s/d}$ for derivative order $s$ on quasi-uniform meshes.

In contrast, the ReLU$^k$ network achieves a faster approximation rate of $n^{-(d+2k+1)/(2d)}$, but our results show that this comes at the cost of a significantly larger condition number in the quasi-uniform setting:
\[
\kappa(M) \sim n^{\frac{d+2k+1}{d}}, \qquad \kappa(K_s) \sim n^{\frac{d+2(k-s)+1}{d}}.
\]
Interestingly, while FEM stiffness matrices deteriorate as $s$ increases, in the neural network case the opposite occurs: taking derivatives effectively reduces the power of the ReLU activation (e.g.\ $\partial_x \mathrm{ReLU}^k \sim \mathrm{ReLU}^{k-1}$), simplifying the basis and improving stability. Thus, in both frameworks a trade-off between approximation power and numerical stability is present, but manifested in opposite ways. In any case, this resonates with a recurring theme in approximation theory and machine learning alike: there is no free lunch.
\end{remark}

The quadrature formula on the sphere, which represents integrals by weighted sums over scattered points, is a key tool in this paper. The following lemma allows us to construct the discrete version of the orthonormal basis $Y_{m,\ell}$, which can be used to analyze the spectral of $M$ and $K_\ss$ further.

\begin{lemma}[{\cite[Theorem 4.1]{mhaskar2001spherical}, \cite[Theorem 3.2]{narcowich2006sobolev}}]\label{lem:quadrature}
There exist nonnegative weights $\tau_1,\dots,\tau_n$ with $\tau_j\lesssim h^d$, and a constant $C_1$ independent of $n$ and $h$, such that
\begin{equation}\label{eqn:quadrature_formula}
    \int_{\SS^d}p(\eta)q(\eta)d\eta=\sul_{j=1}^n\tau_jp(\theta_j^*)q(\theta_j^*),\quad \forall p,q\in\mathbb{P}_J(\SS^d),
\end{equation}
where $J=\lfloor C_1h^{-1}\rfloor$. Moreover, if the separation distance satisfies
\begin{equation}
    \mil_{i\neq j}\rho(\theta_i^*,\theta_j^*)\gtrsim h,
\end{equation}
then
\begin{equation}
    \tau_j\simeq h^d\simeq n^{-1},\qquad j=1,\dots,n.
\end{equation}
All of the corresponding constants are independent of $n$ and $\{\theta_j^*\}_{j=1}^n$.
\end{lemma}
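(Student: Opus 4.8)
The plan is to follow the scattered–data cubature machinery of \cite{mhaskar2001spherical,narcowich2006sobolev}. The first move is a pair of reductions. Set $L:=2J$. Since every $P\in\mathbb{P}_L(\SS^d)$ is a finite linear combination of products $pq$ with $p,q\in\mathbb{P}_J(\SS^d)$ (split each monomial $x^\alpha$ with $|\alpha|\le L$ as $x^\beta x^\gamma$, $|\beta|,|\gamma|\le J$), it suffices to construct nonnegative weights $\tau_1,\dots,\tau_n$ with $\tau_j\lesssim h^d$ for which $\int_{\SS^d}P\,d\omega=\sul_{j=1}^n\tau_jP(\theta_j^*)$ holds for all $P\in\mathbb{P}_L(\SS^d)$; the stated identity is then the special case $P=pq$. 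Secondly, it suffices to treat nodes that are simultaneously $\gtrsim h$-separated and $\lesssim h$-dense: given an arbitrary configuration with mesh size $h$, extract a maximal $\tfrac12 h$-separated subset $S$, which has separation $\ge\tfrac12 h$ and, by maximality, fill distance $\lesssim h$; assigning weight $0$ to the discarded nodes reduces the general claim to this quasi-uniform case (after shrinking the absolute constant $C_1$ to absorb the bounded loss of exactness degree incurred by passing to $S$). In the quasi-uniform case we will in addition recover the two-sided bound $\tau_j\simeq h^d$.

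The core is a Marcinkiewicz--Zygmund (MZ) inequality. Partition $\SS^d$ into Borel cells $R_1,\dots,R_n$ with $\theta_j^*\in R_j$ and $\mathrm{diam}(R_j)\lesssim h$; since the nodes are separated, $w_j:=|R_j|\simeq h^d$ and $\sul_j w_j=1$. For $P\in\mathbb{P}_L(\SS^d)$ write $\int_{\SS^d}P-\sul_j w_jP(\theta_j^*)=\sul_j\int_{R_j}\bigl(P(\eta)-P(\theta_j^*)\bigr)d\omega(\eta)$ and bound each increment by $\rho(\eta,\theta_j^*)\sup_{B(\theta_j^*,Ch)}|\nabla_{\SS^d}P|$. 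A local Bernstein/Nikolskii estimate for degree-$L$ polynomials on geodesic caps of radius $\lesssim h$, valid once $Lh\le c_0$ for an absolute $c_0$, gives $\sup_{B(\theta_j^*,Ch)}|\nabla_{\SS^d}P|\lesssim L\fint_{B(\theta_j^*,C'h)}|P|\,d\omega$; summing over $j$ and using that the enlarged caps $\{B(\theta_j^*,C'h)\}_j$ have bounded overlap (again by separation) yields
\[
\Bigl|\int_{\SS^d}P\,d\omega-\sul_{j=1}^n w_jP(\theta_j^*)\Bigr|\le C_2\,Lh\,\|P\|_{L^1(\SS^d)},\qquad P\in\mathbb{P}_L(\SS^d).
\]
Choosing $C_1$ small enough (absolute) that $Lh=2\lfloor C_1h^{-1}\rfloor h\le c_0$ and $C_2Lh\le\tfrac12$ gives the MZ bound $\bigl|\int_{\SS^d}P-\sul_j w_jP(\theta_j^*)\bigr|\le\tfrac12\|P\|_{L^1(\SS^d)}$, which is where $C_1$ is pinned down, independently of $n$ and $h$.

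From the MZ bound I would pass to an exact nonnegative rule by the correction argument of \cite{mhaskar2001spherical}: the functional $P\mapsto\int_{\SS^d}P$ on $(\mathbb{P}_L(\SS^d),\|\cdot\|_{L^1})$ differs from $P\mapsto\sul_j w_jP(\theta_j^*)$ by a residual functional of norm $\le\tfrac12$; represent that residual, again up to a factor $\le\tfrac12$, by node weights of size $\lesssim h^d$ times the residual norm, add them to the current weights, and iterate. The geometric series converges to weights $\tau_j$ reproducing integration exactly on $\mathbb{P}_L(\SS^d)$ with $|\tau_j-w_j|\le\tfrac12 w_j$, hence $\tau_j\simeq h^d$ and in particular $\tau_j\ge 0$; in the general (non-quasi-uniform) setting one keeps only $0\le\tau_j\lesssim h^d$, since $w_j$ need not be bounded below there. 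For the sharp two-sided conclusion under separation, a volume/packing count gives $n\simeq h^{-d}$, so $h^d\simeq n^{-1}$ and $\tau_j\simeq h^d\simeq n^{-1}$; a direct alternative lower bound uses a localized Lagrange-type polynomial $P_j\in\mathbb{P}_J(\SS^d)$ with $P_j(\theta_j^*)=1$, $P_j(\theta_i^*)=0$ for $i\ne j$, and $\|P_j\|_{L^2(\SS^d)}^2\simeq h^d$, so that $\tau_j=\int_{\SS^d}P_j^2\,d\omega\simeq h^d$ (the lower bound on $\|P_j\|_{L^2}^2$ being a local Nikolskii inequality at $\theta_j^*$).

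The main obstacle is the step converting the MZ inequality into a cubature rule with genuinely \emph{nonnegative} weights — as opposed to merely bounded, or $\ell^2$-optimal, weights — which is the substantive content of \cite{mhaskar2001spherical}; it rests in turn on the local Bernstein/Nikolskii inequalities on small caps needed to make the MZ constant $C_2Lh$ small, and, for the sharp two-sided estimate, on the existence of a well-conditioned localized polynomial attached to each node. The remaining pieces — the reduction to products, the passage to a separated subset, and the packing count — are routine.
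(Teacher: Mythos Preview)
The paper does not prove this lemma at all: it is stated with citations to \cite{mhaskar2001spherical} and \cite{narcowich2006sobolev} and used as a black box throughout Section~\ref{sec_proof_main}. So there is nothing to compare against except the cited references themselves.

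Your sketch is a faithful reconstruction of the Mhaskar--Narcowich--Ward machinery: the Marcinkiewicz--Zygmund inequality via a cell partition and local Bernstein/Nikolskii estimates, followed by the iterative correction that upgrades an approximate rule to an exact one with nonnegative weights. You correctly flag that the passage from the MZ bound to a \emph{nonnegative} exact rule is the substantive step and defer to \cite{mhaskar2001spherical} for it; this is appropriate, since the representation of each successive residual functional by node-supported weights of controlled size is not automatic and is precisely what that reference supplies. One minor caution: your ``direct alternative'' lower bound via a Lagrange-type polynomial $P_j$ with $\|P_j\|_{L^2}^2\simeq h^d$ is more delicate than it appears --- the existence of such a well-localized fundamental polynomial of degree $\le J$ at quasi-uniform nodes is itself a nontrivial fact (essentially equivalent to the well-conditioning of the interpolation problem, or to needlet-type localization). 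Your primary route through $|\tau_j-w_j|\le\tfrac12 w_j$ is the cleaner one and already gives the two-sided bound, so the alternative can be dropped.
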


\begin{theorem}\label{thm_spectr_main}
    Let $d,k,n$, $\{\theta_j^*\}_{j=1}^n$ be as in Theorem \ref{thm:eigenvalue}, $J$ and $\{\tau_j\}_{j=1}^n$ be as in Lemma \ref{lem:quadrature}.
    \begin{enumerate}
        \item [(1)] If $h\simeq\underline{h}$,
        then
        \begin{equation}\label{eqn_spectr_main1}
        \lambda_j(M)\simeq nj^{-\frac{d+2k+1}{d}},\qquad j=1,\dots,n.
    \end{equation}
    and
    \begin{equation}\label{eqn_spectr_main1_stiff}
        \begin{split}
            \lambda_j(K_\ss)\simeq nj^{-\frac{d+2(k-s)+1}{d}},\qquad j=1,\dots,n.
        \end{split}
    \end{equation}
        \item [(2)] Moreover, if $\tau_1=\dots=\tau_n=n^{-1}$, then there exists $C_2$ and $J_0$ independent of $n$, such that for $m$ satisfying:
    \begin{eqnarray*}
        &&m+2\in E\cap\{J_0,J_0+1,\dots,J\},\\
        &&\xi(m+2)\geq C_2h^{2k+1},\\
        &&\xi_s(m+2)\geq C_2h^{2(k-s)+1},
    \end{eqnarray*}
    we have
    \begin{equation}\label{eqn_spectr_main2}
        \begin{split}
            &n\big(\xi(m+2)+C_2h^{d+2k+1}\big)\geq\lambda_{d_{m}+1}(M)\geq\dots\geq \lambda_{d_{m+2}}(M)\geq n\xi(m+2).\\
            &n\big(\xi_\ss(m+2)+C_2h^{d+2(k-s)+1}\big)\geq\lambda_{d_{m}+1}(K_\ss)\geq\dots\geq\lambda_{d_{m+2}}(K_\ss)\geq n\xi_\ss(m+2),
        \end{split}
    \end{equation}
    where
    \begin{equation}\label{eqn_def_dm}
        d_{m}=\sul_{i\in E,~i\leq m}N(i).
    \end{equation}
    \end{enumerate}
\end{theorem}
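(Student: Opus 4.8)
My plan is to diagonalize the Gram matrices against a \emph{discrete} spherical-harmonic system supplied by the quadrature rule of Lemma~\ref{lem:quadrature}, then read the spectrum off the expansions \eqref{eqn:G_decomp} and \eqref{eqn:K_decomp}. Assume first $h\simeq\underline h$; then the separation hypothesis of Lemma~\ref{lem:quadrature} holds, so $\tau_j\simeq n^{-1}$, and writing $D=\mathrm{diag}(\tau_1,\dots,\tau_n)$ a congruence (Ostrowski) argument gives $\lambda_j(M)\simeq n\,\lambda_j(\widetilde M)$ with $\widetilde M:=D^{1/2}MD^{1/2}$ (and $\widetilde M=n^{-1}M$ exactly under the extra hypothesis $\tau_j\equiv n^{-1}$ of part~(2)). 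For $m\le J$ put $v_{m,\ell}:=\bigl(\sqrt{\tau_j}\,Y_{m,\ell}(\theta_j^*)\bigr)_{j=1}^n\in\RR^n$; since \eqref{eqn:quadrature_formula} integrates each product $Y_{m,\ell}Y_{m',\ell'}$ with $m,m'\le J$ exactly, the set $\{v_{m,\ell}\}_{m\le J,\ \ell\le N(m)}$ is orthonormal, and by the addition formula $\Pi_m:=D^{1/2}P(m)D^{1/2}=\sum_\ell v_{m,\ell}v_{m,\ell}^\top$ is an orthogonal projection of rank $N(m)$ onto $W_m:=\mathrm{span}_\ell\{v_{m,\ell}\}$, with $W_m\perp W_{m'}$ for distinct $m,m'\in\{0,\dots,J\}$. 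Splitting \eqref{eqn:G_decomp} at $m=J$,
\[
\widetilde M=\Bigl(\sum_{m\in E,\ m\le J}\xi(m)\,\Pi_m\Bigr)+\Bigl(\sum_{m\in E,\ m> J}\xi(m)\,D^{1/2}P(m)D^{1/2}\Bigr)=:A+R,\qquad R\succeq0 .
\]
The matrix $A$ is a weighted sum of mutually orthogonal projections, hence has eigenvalues $\xi(m)$ with multiplicity $N(m)$ ($m\in E$, $m\le J$) together with $0$; using $d_m=\sum_{i\in E,\ i\le m}N(i)\simeq m^d$, $\xi(m)\simeq m^{-(d+2k+1)}$ and the eventual monotonicity of $\xi$ (Lemma~\ref{lem:Bach}), this gives $\lambda_j(A)\simeq j^{-(d+2k+1)/d}$ for $1\le j\le d_J$ and $\lambda_j(A)=0$ for $j>d_J$.

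For the \emph{lower} bound I would use $\widetilde M\succeq A\succeq\xi(m^*)\,\Pi_{\le m^*}$ for any fixed-offset $m^*\le J$, where $\Pi_{\le m^*}:=\sum_{m\in E,\ m\le m^*}\Pi_m$ is an orthogonal projection of rank $d_{m^*}$; this yields $\lambda_{d_{m^*}}(\widetilde M)\ge\xi(m^*)$, and taking $m^*\simeq j^{1/d}$ gives $\lambda_j(\widetilde M)\gtrsim j^{-(d+2k+1)/d}$ for all $j\le d_J$. Since $h\simeq\underline h$ forces $n\simeq h^{-d}\simeq d_J$, the indices $j\in(d_J,n]$ satisfy $j\simeq n$, and for these Theorem~\ref{thm:eigenvalue} and the congruence give $\lambda_j(\widetilde M)\ge\lambda_n(\widetilde M)\gtrsim n^{-1}\underline h^{2k+1}\simeq h^{d+2k+1}\simeq n^{-(d+2k+1)/d}\simeq j^{-(d+2k+1)/d}$. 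For the \emph{upper} bound, Weyl's inequality gives $\lambda_j(\widetilde M)\le\lambda_j(A)+\|R\|$, so it suffices to establish the tail estimate
\[
\|R\|=\Bigl\|\sum_{m\in E,\ m>J}\xi(m)\,D^{1/2}P(m)D^{1/2}\Bigr\|\ \lesssim\ h^{d+2k+1}:
\]
granting it, $\lambda_j(\widetilde M)\lesssim j^{-(d+2k+1)/d}+h^{d+2k+1}\lesssim j^{-(d+2k+1)/d}$ for $j\le d_J$ (as $j\le d_J<n$), and $\lambda_j(\widetilde M)\le\lambda_{d_J+1}(\widetilde M)\le\|R\|\lesssim h^{d+2k+1}\simeq j^{-(d+2k+1)/d}$ for $j\in(d_J,n]$. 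Multiplying by $n$ proves \eqref{eqn_spectr_main1}.

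The tail estimate is the crux, and I expect it to be the main obstacle. The naive bound $\|D^{1/2}P(m)D^{1/2}\|\le\mathrm{tr}\bigl(D^{1/2}P(m)D^{1/2}\bigr)=N(m)$ only gives $\|R\|\lesssim\sum_{m>J}\xi(m)N(m)\lesssim h^{2k+1}$, which is too weak by the factor $h^{d}\simeq n^{-1}$; the missing factor must come from cancellation among the summands $D^{1/2}P(m)D^{1/2}$ (their ``aliases'' of distinct degrees $m>J$ spread across distinct low-degree modes rather than reinforcing one another, as is transparent for symmetric configurations). To capture this I would sum by parts in $m$ along $E\cap(J,\infty)$: the forward differences of $\xi$ decay one extra order per step, $(-1)^\beta\xi^{(\beta)}(t)\simeq t^{-(d+2k+1)-\beta}$ by Lemma~\ref{lem:Bach}, while $P(m)$ gets replaced by the iterated partial sums $P_\beta(m)$ of \eqref{eqn:p_n^r_explicit}, whose kernels are (for $\beta$ past a dimension-dependent threshold) nonnegative de la Vallée Poussin–type kernels concentrated at scale $m^{-1}\lesssim h$; bounding $\|D^{1/2}P_\beta(m)D^{1/2}\|$ by a Schur test against the quasi-uniform point set, combined with the faster-decaying differenced coefficients and a careful treatment of the boundary terms produced near $m\simeq J$, is what should produce the extra $h^{d}$. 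The remainder is bookkeeping.

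For the stiffness matrix the argument is identical with $\xi,E$ replaced by $\xi_s,E_s$: by Lemma~\ref{cor_mu_esti}, $E_s=E$ and $(-1)^\beta\xi_s^{(\beta)}(t)\simeq t^{-(d+2(k-s)+1)-\beta}$, so \eqref{eqn:K_decomp} and the stiffness part of Theorem~\ref{thm:eigenvalue} give $\lambda_j(K_s)\simeq n\,j^{-(d+2(k-s)+1)/d}$, i.e.\ \eqref{eqn_spectr_main1_stiff}. Finally, for part~(2): when $\tau_j\equiv n^{-1}$ the weights are uniform, $\widetilde M=n^{-1}M$, and $A$ is literally a weighted sum of mutually orthogonal projections, so $\lambda_{d_m+1}(A)=\dots=\lambda_{d_{m+2}}(A)=\xi(m+2)$ (recall $d_{m+2}-d_m=N(m+2)$) whenever $m+2\in E\cap\{J_0,\dots,J\}$, with $J_0$ past the onset of the monotonicity and asymptotics of $\xi$; applying Weyl's inequality to $\widetilde M=A+R$, using $R\succeq0$ for the lower end and $\|R\|\le C_2h^{d+2k+1}$ for the upper end — the smallness conditions $\xi(m+2)\ge C_2h^{2k+1}$, resp.\ $\xi_s(m+2)\ge C_2h^{2(k-s)+1}$, keeping $m+2$ comfortably below $J$ where the orthogonality and the tail bound are clean — then yields \eqref{eqn_spectr_main2}, with the stiffness case handled verbatim.
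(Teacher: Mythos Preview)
Your overall architecture is the same as the paper's: pass to the weighted matrix $\widetilde M=D^{1/2}MD^{1/2}$, build the orthonormal discrete harmonic system $v_{m,\ell}$ from the quadrature rule, split at $m=J$ into $A+R$ with $A$ a weighted sum of mutually orthogonal projections, and then combine Courant--Fischer/Weyl with a tail bound $\|R\|\lesssim h^{d+2k+1}$. The paper packages this as a separate Theorem~\ref{thm_spectr} for $M_\tau$, but the content is what you wrote.

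The one genuine divergence is how the tail bound is obtained. You propose Abel summation by parts along $E\cap(J,\infty)$, trading $\xi(m)P(m)$ for higher differences of $\xi$ against the iterated Ces\`aro sums $P_\beta(m)$, and then a Schur test on the resulting localized kernels. The paper instead uses a Littlewood--Paley/needlet decomposition: it writes $\widetilde M=\sum_q D^{1/2}Q(q)D^{1/2}$ with $Q(q)=\sum_{m\in E}\zeta(m/2^q)\xi(m)P(m)$ for a smooth dyadic cutoff $\zeta$, doubles the point set antipodally to fill in the missing parities (the $R(q)$ matrices), and invokes the Petrushev--Xu localization estimate (Theorem~\ref{thm_petru_xu}) to bound the kernel $\varphi_q$ pointwise; a Schur test then gives $\|Q(q)\|\lesssim 2^{-q(2k+1)}$ for $2^q\lesssim\underline h^{-1}$ and faster decay beyond, whence $\sum_{q\gtrsim\log_2 J}\|D^{1/2}Q(q)D^{1/2}\|\lesssim h^{d+2k+1}$. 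Your summation-by-parts route is a legitimate alternative and is in fact the technique of the companion paper cited at \eqref{eqn:p_n^r_explicit}; the needlet route has the advantage that the antipodal doubling trick handles the every-other-$m$ support of $E$ cleanly and the boundary terms disappear because $\zeta$ is smooth and compactly supported, whereas in your approach you will have to manage both the parity restriction in the Abel summation and the boundary contributions at $m\simeq J$ by hand.
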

\begin{remark}
The stronger hypotheses $$\tau_1=\dots=\tau_n=n^{-1}$$ in Theorem~\ref{thm_spectr_main} is natural and meaningful. In fact, the existence of spherical $t$-designs with exactly equal weights $n^{-1}$ has been proved in \cite{bondarenko2013optimal}.
\end{remark}

From a practical standpoint, these results establish quantitative control over the condition numbers of the Gram and stiffness matrices associated with shallow ReLU$^k$ networks. This directly impacts the well-posedness and numerical solvability of the least-squares system governing the optimization of neural network models. The role of $M$ and $K_s$ here is closely analogous to the mass and stiffness matrices in FEM, but with a crucial distinction: in the neural network setting, the spectral structure leads to larger condition numbers overall but more favorable dependence on the derivative order $s$, which make it suitable for solving high order PDEs. Our estimates thus provide a theoretical foundation for understanding the stability and generalization behavior of neural networks through the lens of spectral conditioning, while also clarifying their differences from classical finite element discretizations.

% \section{Applications to effective dimension and generalization}\label{sec:applications}

    The effective degrees of freedom $d_{\mathrm{eff}}(\lambda)$ provide a crucial measure of the capacity of the model to fit the data. In particular, they help to quantify the trade-off between model complexity and generalization performance, which is a central theme in statistical learning theory. Our spectral estimates enable precise characterization of $d_{\mathrm{eff}}(\lambda)$ for the mass matrix $M$ associated with ReLU$^k$ networks, shedding light on how the choice of regularization parameter $\lambda$ influences the model's effective complexity.

% \paragraph{Definition and interpretation.}

% \begin{proposition}[Variance bound via $d_{\mathrm{eff}}(M,\lambda)$]\label{prop:var-deff}
% Under the setup of the remark, for any fixed design $\{\omega_i\}_{i=1}^m$ and any $\lambda>0$,
% \[
% \mathbb E\!\left[\|\hat f-f^\star\|_{\mathcal L^2(\SS^d)}^2\,\middle|\,\{\omega_i\}\right]
% \;\le\;\frac{\sigma^2}{m}\operatorname{tr}\!\big(M\,(G+\lambda I)^{-1}\big).
% \]
% If, in addition, $G$ is close to $M$ in operator norm (e.g.\ $m$ large and $\omega_i\sim\mathrm{Unif}(\SS^d)$), then
% \[
% \mathbb E\!\left[\|\hat f-f^\star\|_{\mathcal L^2(\SS^d)}^2\right]
% \;\lesssim\;\frac{\sigma^2}{m}\,d_{\mathrm{eff}}(M,\lambda).
% \]
% \end{proposition}

% \begin{proof}
% Write $\hat a-a^\star=(G+\lambda I)^{-1}\frac{1}{m}\Phi^\top\varepsilon - \lambda(G+\lambda I)^{-1}a^\star$. The variance term is
% \[
% \frac{1}{m^2}\mathbb E\!\big[\varepsilon^\top \Phi(G+\lambda I)^{-1}M(G+\lambda I)^{-1}\Phi^\top\varepsilon\big]
% =\frac{\sigma^2}{m}\operatorname{tr}\!\big((G+\lambda I)^{-1}M(G+\lambda I)^{-1}G\big).
% \]
% Since $0\preceq G\preceq G+\lambda I$, we have $(G+\lambda I)^{-1}G(G+\lambda I)^{-1}\preceq (G+\lambda I)^{-1}$. Therefore the variance is bounded by $\frac{\sigma^2}{m}\operatorname{tr}\!\big(M(G+\lambda I)^{-1}\big)$. Concentration of $G$ around $M$ (matched distribution) then yields the claimed approximation by $d_{\mathrm{eff}}(M,\lambda)$.
% \end{proof}

For a positive regularization parameter $\lambda$, the \emph{effective degrees of freedom} (or \emph{effective dimension})
\begin{equation}\label{eq:def-deff}
d_{\mathrm{eff}}(\lambda):=\mathrm{tr}\big(M(M+\lambda I)^{-1}\big)=\sum_{j=1}^n \frac{\lambda_j(M)}{\lambda_j(M)+\lambda}
\end{equation}
measures the number of spectral directions that are not suppressed by Tikhonov regularization. It satisfies $0\le d_{\mathrm{eff}}(\lambda)\le n$, it is monotonically decreasing in $\lambda$, and obeys the endpoint bounds
$d_{\mathrm{eff}}(0^+)=\mathrm{rank}(M)$ and $d_{\mathrm{eff}}(\lambda)\le \lambda^{-1}\mathrm{tr}(M)$.
In kernel ridge regression, $d_{\mathrm{eff}}(\lambda)$ governs the variance term of the risk (see, e.g., \cite{caponnetto2007optimal,rudi2017generalization}).

Under antipodally quasi-uniform parameters ($h\simeq \underline{h}$), Theorem~\ref{thm_spectr_main}(1) yields
\begin{equation}\label{eq:spectral-law}
\lambda_j(M)\simeq n\, j^{-\alpha},\qquad \alpha:=\frac{d+2k+1}{d}>1,\qquad 1\le j\le n.
\end{equation}
Consequently, for thresholds $t$ in the interior spectral range $t\in [c\,n^{1-\alpha}, C\,n]$,
the eigenvalue counting function $N_M(t):=\#\{j:\lambda_j(M)\ge t\}$ obeys the Weyl-type estimate
\begin{equation}\label{eq:counting}
N_M(t)\ \simeq\ \Big(\frac{n}{t}\Big)^{1/\alpha}.
\end{equation}

\begin{corollary}[Effective dimension ]\label{cor:effdim}
Assume $h\simeq \underline{h}$ so that \eqref{eq:spectral-law} holds. Let $\alpha=(d+2k+1)/d$.
There exist constants $c_1,c_2,c_3,c_4>0$, independent of $n$ and $\lambda$, such that for all
\[
c_1\,n^{1-\alpha}\ \le\ \lambda\ \le\ c_2\,n,
\]
the effective degrees of freedom satisfy
\begin{equation}\label{eq:deff-scaling}
c_3\Big(\frac{n}{\lambda}\Big)^{1/\alpha}\ \lesssim\ d_{\mathrm{eff}}(\lambda)\ \lesssim\ c_4\Big(\frac{n}{\lambda}\Big)^{1/\alpha}.
\end{equation}
In particular,
\[
d_{\mathrm{eff}}(\lambda)\ \simeq\ \Big(\frac{n}{\lambda}\Big)^{\frac{d}{\,d+2k+1\,}}.
\]
Moreover, for all $\lambda>0$ one has the global sandwich bounds
\begin{equation}\label{eq:deff-global}
d_{\mathrm{eff}}(\lambda)\ \le\ C\,\min\Big\{\,n,\ \Big(\frac{n}{\lambda}\Big)^{1/\alpha}+ \frac{n}{\lambda}\,\Big\},
\qquad
d_{\mathrm{eff}}(\lambda)\ \ge\ c\,\min\Big\{\,n,\ \Big(\frac{n}{\lambda}\Big)^{1/\alpha}\,\Big\},
\end{equation}
where $c,C>0$ depend only on $d,k$.
\end{corollary}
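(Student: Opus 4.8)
The plan is to feed the spectral law \eqref{eq:spectral-law} directly into the defining sum \eqref{eq:def-deff} and reduce the problem to a one‑dimensional sum that is comparable to an elementary integral. Writing $f(x)=x/(x+\lambda)$, which is increasing and satisfies the pointwise sandwich $\tfrac12\min\{1,x/\lambda\}\le f(x)\le \min\{1,x/\lambda\}$, I would first record that, up to absolute constants, $d_{\mathrm{eff}}(\lambda)\simeq\sum_{j=1}^n\min\{1,\lambda_j(M)/\lambda\}$. Inserting the bounds $a\,nj^{-\alpha}\le\lambda_j(M)\le b\,nj^{-\alpha}$ (the quantitative form of \eqref{eq:spectral-law}, with $a,b$ depending only on $d,k$) reduces everything to estimating $\sum_{j=1}^n\min\{1,c(n/\lambda)j^{-\alpha}\}$ for $c\in\{a,b\}$.

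Next I would split this sum at the crossover index $j^\star:=(c\,n/\lambda)^{1/\alpha}$, the value where $c(n/\lambda)j^{-\alpha}=1$. For $1\le j\le j^\star$ each summand is of order $1$, contributing $\lfloor j^\star\rfloor\simeq j^\star$; for $j>j^\star$ each summand equals $c(n/\lambda)j^{-\alpha}$, and since $\alpha>1$ the tail $\sum_{j>j^\star}j^{-\alpha}\simeq (j^\star)^{1-\alpha}/(\alpha-1)$ is summable (compare with $\int_{j^\star}^\infty x^{-\alpha}\,dx$, valid since $x\mapsto x^{-\alpha}$ is monotone), so that piece contributes $\simeq (n/\lambda)(j^\star)^{1-\alpha}\simeq (n/\lambda)^{1/\alpha}$. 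Hence both pieces are of order $(n/\lambda)^{1/\alpha}$, which gives \eqref{eq:deff-scaling} whenever $1\lesssim j^\star\lesssim n$, i.e. precisely on the window $c_1 n^{1-\alpha}\le\lambda\le c_2 n$, with $c_1,c_2$ chosen so that the constants $a,b$ cannot push $j^\star$ outside $[1,n]$. The Weyl‑type estimate \eqref{eq:counting} is exactly the $j\le j^\star$ count in this computation and can be cited as a shortcut.

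For the global sandwich \eqref{eq:deff-global} I would drop the window restriction. The bound $d_{\mathrm{eff}}(\lambda)\le n$ is immediate from $f\le 1$. For the second upper bound, split $\sum_j\min\{1,\lambda_j(M)/\lambda\}$ into indices with $\lambda_j(M)\ge\lambda$ (there are $N_M(\lambda)\lesssim (n/\lambda)^{1/\alpha}$ of them, each contributing at most $1$) and indices with $\lambda_j(M)<\lambda$ (contributing at most $\lambda^{-1}\sum_j\lambda_j(M)=\lambda^{-1}\mathrm{tr}(M)\lesssim (n/\lambda)\sum_j j^{-\alpha}\lesssim n/\lambda$, again by $\alpha>1$); combining and intersecting with $\le n$ yields the claim. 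For the lower bound, keep only the terms with $\lambda_j(M)\ge\lambda$ to get $d_{\mathrm{eff}}(\lambda)\ge\tfrac12 N_M(\lambda)$, and then note $N_M(\lambda)\gtrsim\min\{n,(n/\lambda)^{1/\alpha}\}$: when $(n/\lambda)^{1/\alpha}\le n$ this is \eqref{eq:counting} (the head count above), and when $(n/\lambda)^{1/\alpha}>n$ one has $\lambda\lesssim n^{1-\alpha}\simeq\lambda_n(M)$, so every eigenvalue exceeds $\lambda$ and $N_M(\lambda)=n$.

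The only delicate point, and essentially the only obstacle, is bookkeeping of constants: because \eqref{eq:spectral-law} is an order‑of‑magnitude statement, $j^\star$ is defined only up to a multiplicative constant, so I would carry both thresholds $j^\star_-=(an/\lambda)^{1/\alpha}$ and $j^\star_+=(bn/\lambda)^{1/\alpha}$ through the split and verify that the head and tail estimates match up to constants depending only on $a,b,\alpha$, hence only on $d,k$. No input beyond \eqref{eq:spectral-law}, the trivial bound $d_{\mathrm{eff}}\le\lambda^{-1}\mathrm{tr}(M)$, and $\alpha>1$ is needed; the integral comparisons are routine.
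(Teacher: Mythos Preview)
Your proposal is correct and follows essentially the same route as the paper: both split the sum at the crossover index $j^\star\simeq(n/\lambda)^{1/\alpha}$, bound the head by a count and the tail by the integral $\sum_{j>j^\star}j^{-\alpha}\simeq (j^\star)^{1-\alpha}$ (using $\alpha>1$), and derive the global bounds from $d_{\mathrm{eff}}\le n$ together with the trace bound $d_{\mathrm{eff}}\le\lambda^{-1}\mathrm{tr}(M)$. The only cosmetic difference is your preliminary replacement of $x/(x+\lambda)$ by $\min\{1,x/\lambda\}$, whereas the paper works directly with the original summand; this changes nothing in the argument.
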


\begin{proof}
By Theorem~\ref{thm_spectr_main}(1) there are $c_-,c_+>0$ such that
$c_-\,n\, j^{-\alpha}\le \lambda_j(M)\le c_+\,n\, j^{-\alpha}$ for $1\le j\le n$.
For the lower bound in \eqref{eq:deff-scaling}, set
$J_-:=\big\lfloor (c_- n/(2\lambda))^{1/\alpha}\big\rfloor$. Then for $j\le J_-$,
$\lambda_j(M)\ge 2\lambda$, hence $\lambda_j(M)/(\lambda_j(M)+\lambda)\ge 2/3$ and
\[
d_{\mathrm{eff}}(\lambda)\ =\ \sum_{j=1}^{n}\frac{\lambda_j(M)}{\lambda_j(M)+\lambda}
\ \ge\ \sum_{j=1}^{J_-}\frac{\lambda_j(M)}{\lambda_j(M)+\lambda}\ \gtrsim\ J_-\ \gtrsim\ (n/\lambda)^{1/\alpha}.
\]
For the upper bound, define $J_+:=\big\lceil (c_+ n/\lambda)^{1/\alpha}\big\rceil$ and split
\[
d_{\mathrm{eff}}(\lambda)\ \le\ J_+\ +\ \frac{1}{\lambda}\sum_{j>J_+}\lambda_j(M)
\ \le\ J_+\ +\ \frac{c_+ n}{\lambda}\sum_{j>J_+}j^{-\alpha}.
\]
Since $\alpha>1$, $\sum_{j>J_+} j^{-\alpha}\lesssim \int_{J_+}^{\infty}x^{-\alpha}\,dx\simeq J_+^{-(\alpha-1)}$, hence
\[
d_{\mathrm{eff}}(\lambda)\ \lesssim\ J_+\ +\ \frac{c_+ n}{\lambda}\,J_+^{-(\alpha-1)}
\ \simeq\ (n/\lambda)^{1/\alpha}.
\]
This proves \eqref{eq:deff-scaling} on the stated $\lambda$-range. The global bounds \eqref{eq:deff-global} follow from the same splitting, together with
$d_{\mathrm{eff}}(\lambda)\le n$ and $d_{\mathrm{eff}}(\lambda)\le \lambda^{-1}\mathrm{tr}(M)\lesssim n/\lambda$
(since $\sum_{j=1}^n j^{-\alpha}$ is uniformly bounded for $\alpha>1$).
\end{proof}

The rest of the paper is devoted to the proof of Theorems~\ref{thm:eigenvalue} and~\ref{thm_spectr_main}, which is carried out in Section~\ref{sec_proof_main}.

% We end this section by introducing a quadrature formula on the sphere, which represents integrals by weighted sums over scattered points. The following result, a consequence of the Marcinkiewicz-Zygmund inequality for quasi-uniform points, shows the existence of a stable quadrature rule with well-behaved weights.

% \begin{lemma}\label{lem:quadrature}
% Let $\{\theta_j^*\}_{j=1}^n \subset \SS^d$ be a quasi-uniform set of points with mesh norm $h$ and separation distance $\underline{h}$ satisfying $h \simeq \underline{h}$. Then there exist positive weights $\{\tau_j\}_{j=1}^n$ and constants $C_1, C_2, C_3 > 0$ independent of $n$, such that
% \begin{enumerate}
%     \item[(a)] The weights are uniformly bounded above and below:
%     $$ C_1 h^d \le \tau_j \le C_2 h^d, \qquad j=1,\dots,n. $$
%     \item[(b)] The weights provide a quadrature rule for all spherical polynomials $P \in \mathbb{P}_J(\SS^d)$ of degree at most $J = \lfloor C_3 h^{-1} \rfloor$:
%     \begin{equation}\label{eqn:quadrature_formula}
%         \int_{\SS^d} P(\eta) d\eta = \sum_{j=1}^n \tau_j P(\theta_j^*).
%     \end{equation}
% \end{enumerate}
% \end{lemma}

\section{Proofs of the Main Results}\label{sec_proof_main}
% In order to prove Theorem \ref{thm:eigenvalue}, we fill out these vanishing terms in \eqref{eqn:G_decomp} and \eqref{eqn:K_decomp} to denote
% \begin{equation}\label{eqn_def_tMK}
%     \tM:=\sul_{m=0}^\infty\xi_0(m)P(m),\quad\tK:=\sul_{m=0}^\infty\xi_s(m)P(m)
% \end{equation}
% where $\xi_0$, $\xi_s$ are the functions defined in \eqref{eqn_def_xi}.

In the theory of harmonic analysis, we usually divide the summations of type \eqref{eqn:G_decomp} and \eqref{eqn:K_decomp} into ``needlets" by a smooth function $\zeta$ satisfying the following conditions (see, e.g., \cite[(3.6)]{petrushev2005localized})
\begin{eqnarray*}
&&\zeta\in\mathcal{C}^\infty(\RR),\quad\zeta\geq0,\quad\mathrm{supp}(\zeta)\subset[1/2,2],\\
&&\zeta(t)>c_1>0,\qquad t\in[3/5,5/3],\\
&&\zeta(t)+\zeta(2t)=1,\quad t\in[1/2,1].
\end{eqnarray*}
A typical property of $\zeta$ is the partition of unity:
\begin{equation}\label{eqn_zeta_sum1}
    1=\sul_{q=0}^\infty\zeta(2^{-q}m),\qquad m\in\NN_+.
\end{equation}

Now we can write \eqref{eqn:G_decomp} and \eqref{eqn:K_decomp} as
\begin{equation}
    M=\sul_{q=0}^\infty Q(q),\quad K_s=\sul_{q=0}^\infty Q_s(q),
\end{equation}
where
\begin{equation}\label{eqn_sob_norm_s_r}
    \begin{split}
        &Q(0)=\xi(0)P(0)+\xi(1)P(1),\quad Q(q)=\sul_{m\in E}\zeta\big(\frac{m}{2^q}\big)\xi(m)P(m),\qquad q=1,2,\dots,\\
        &Q_s(0)=\xi_s(0)P(0)+\xi_s(1)P(1),\quad Q_s(q)=\sul_{m\in E_s}\zeta\big(\frac{m}{2^q}\big)\xi_s(m)P(m),\qquad q=1,2,\dots.
    \end{split}
\end{equation}
For $q\geq\lceil\log_2(k+1)\rceil+1$, we define some related matrices with better properties:
\begin{equation}
    R(q)=\binom{\tQ_+(q)\quad\tQ_-(q)}{\tQ_-(q)\quad\tQ_+(q)},\qquad R_s(q)=\binom{\tQ_{s,+}(q)\quad\tQ_{s,-}(q)}{\tQ_{s,-}(q)\quad\tQ_{s,+}(q)},
\end{equation}
where
\begin{equation}
    \begin{split}
        &\tQ_+(q)=\sul_{m=2^{q-1}+1}^{2^{q+1}}\zeta\big(\frac{m}{2^q}\big)\xi(m)P(m),\quad \tQ_-(q)=\sul_{m=2^{q-1}+1}^{2^{q+1}}(-1)^{m-k-1}\zeta\big(\frac{m}{2^q}\big)\xi(m)P(m),\\
        &\tQ_{s,+}(q)=\sul_{m=2^{q-1}+1}^{2^{q+1}}\zeta\big(\frac{m}{2^q}\big)\xi_s(m)P(m),\quad \tQ_{s,-}(q)=\sul_{m=2^{q-1}+1}^{2^{q+1}}(-1)^{m-k-1}\zeta\big(\frac{m}{2^q}\big)\xi_s(m)P(m).
    \end{split}
\end{equation}
It has been proved in Lemma \ref{lem:Bach} and \ref{cor_mu_esti} that for $m\geq k+1$, $m=0$ if and only if $m\equiv k+1\mod 2$. Thus the following formula holds for any $a\in\RR^n$:
\begin{equation}\label{eqn_rel_R_Q}
    \begin{split}
        &(a^\top,a^\top)R(q)\binom{a}{a}=(a^\top,a^\top)\begin{pmatrix}
\tQ_+(q) & \tQ_-(q) \\
\tQ_-(q) & \tQ_+(q)
\end{pmatrix}\binom{a}{a}=2a^\top\tQ_+(q)a+2a^\top\tQ_-(q)a\\
        =&2a^\top\Big[\sul_{m=2^{q-1}+1}^{2^{q+1}}(1+(-1)^{m-k-1})\zeta\big(\frac{m}{2^q}\big)\xi(m)P(m)\Big]a
        =4a^\top\Big[\sul_{m\in E}\zeta\big(\frac{m}{2^q}\big)\xi(m)P(m)\Big]a\\
        =&4a^\top Q(q)a.
    \end{split}
\end{equation}
Similarly,
\begin{equation}\label{eqn_rel_Rs_Qs}
    (a^\top,a^\top)R_s(q)\binom{a}{a}=4a^\top Q_s(q)a.
\end{equation}
In the rest of this section, we concentrate on the matrices $R(q)$ and $R_s(q)$.

\subsection{Estimating the norms of needlet matrices}
It is known that $p_m(t)=(-1)^{m}p_m(-t)$ for all $m\in\NN$, then
\begin{equation*}
    \begin{split}
        &\tQ_-(q)_{i,j}=\sul_{m=2^{q-1}+1}^{2^{q+1}}(-1)^{m-k-1}\zeta\big(\frac{m}{2^q}\big)\xi(m)p_m(\theta_i^*\cdot\theta_j^*)=(-1)^{k+1}\sul_{m=2^{q-1}+1}^{2^{q+1}}\zeta\big(\frac{m}{2^q}\big)\xi(m)p_m(-\theta_i^*\cdot\theta_j^*),\\
        &\tQ_{s,-}(q)_{i,j}=\sul_{m=2^{q-1}+1}^{2^{q+1}}(-1)^{m-k-1}\zeta\big(\frac{m}{2^q}\big)\xi_s(m)p_m(\theta_i^*\cdot\theta_j^*)=(-1)^{k+1}\sul_{m=2^{q-1}+1}^{2^{q+1}}\zeta\big(\frac{m}{2^q}\big)\xi_s(m)p_m(-\theta_i^*\cdot\theta_j^*).
    \end{split}
\end{equation*}

Denote polynomials
\begin{equation*}
    \begin{split}
        &\varphi_{q}(t)=\sul_{m=2^{q-1}+1}^{2^{q+1}}\zeta\big(\frac{m}{2^q}\big)\xi(m)p_m(t),\qquad t\in[-1,1],\\
        &\varphi_{s,q}(t)=\sul_{m=2^{q-1}+1}^{2^{q+1}}\zeta\big(\frac{m}{2^q}\big)\xi_s(m)p_m(t),\qquad t\in[-1,1],
    \end{split}
\end{equation*}
then
\begin{equation}
    R(q)_{i,j}=\varphi_q(\eta_i^*\cdot\eta_j^*),\quad R_s(q)_{i,j}=\varphi_{s,q}(\eta_i^*\cdot\eta_j^*),\qquad i,j=1,\dots,2n,
\end{equation}
where
\begin{equation*}
    (\eta_1^*,\dots,\eta_{2n}^*)=(\theta_1^*,\dots,\theta_n^*,-\theta_1^*,\dots,-\theta_n^*).
\end{equation*}
Then \eqref{def_h_underlineh} gives
\begin{equation}
    \min\limits_{i\neq j}\rho(\eta_i^*,\eta_j^*)=\underline{h}.
\end{equation}

The localization property of polynomials $\{\varphi_{q}\}_{q=1}^\infty$ and $\{\varphi_{\ss,q}\}_{q=1}^\infty$ can be regarded as an immediate consequence of the following standard localization property in orthogonal polynomial theory (see, e.g., \cite{petrushev2005localized,ivanov2010sub,filbir2009filter, mhaskar2010eignets}).
\begin{theorem}[\cite{petrushev2005localized}, Theorem 2.4]\label{thm_petru_xu}
    Let $a\in\mathcal{C}^\alpha[0,\infty)$ and $\mathrm{supp}(a)\subset[1/2,2]$, $\{P_m\}_{m=0}^\infty$ be the standard Jacobi polynomials with index $\frac{d-2}{2}$, then for any $0\leq\theta,\phi\leq\pi$, the function
    \begin{equation}
        l_n(\cos\theta,\cos\phi)=\sul_{m=0}^\infty a\big(\frac{m}{n}\big)\frac{P_m(\cos\theta)P_m(\cos\phi)}{\|P_m\|_{w_d}^2}
    \end{equation}
    has the bound
    \begin{equation*}
        |l_n(\cos\theta,\cos\phi)|\lesssim\frac{n\mal_{1\leq\beta\leq\alpha}\|a^{(\alpha)}\|_{\mathcal{L}^1}}{\big(\sqrt{\sin^2\theta+n^{-2}}\big)^{\frac{d-1}{2}}\big(\sqrt{\sin^2\phi+n^{-2}}\big)^{\frac{d-1}{2}}(1+n|\theta-\phi|)^{\alpha-(2d-1)}}.
    \end{equation*}
    
\end{theorem}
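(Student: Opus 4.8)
The plan is to follow the standard route for localized polynomial kernels: substitute the classical asymptotics of Jacobi polynomials into the defining sum, use product-to-sum to expose oscillatory exponentials $e^{im(\theta\pm\phi)}$, and extract off-diagonal decay by repeated summation by parts. Write $\nu=\frac{d-2}{2}$ for the common Jacobi index, so $P_m=P_m^{(\nu,\nu)}$ with $P_m(1)\simeq m^\nu$, and recall the inputs I would use: $\|P_m\|_{w_d}^2\simeq m^{-1}$; the Darboux asymptotic
$$P_m(\cos\theta)=\frac{\kappa(\theta)}{\sqrt m}\cos\!\big((m+\gamma)\theta+\delta\big)+E_m(\theta),\qquad \kappa(\theta)=c_\nu(\sin\tfrac\theta2)^{-\nu-1/2}(\cos\tfrac\theta2)^{-\nu-1/2},$$
valid for $c/m\le\theta\le\pi-c/m$ with a remainder $E_m$ smaller than the main term by a factor $O((m\sin\theta)^{-1})$; the Mehler--Heine bound $|P_m(\cos\theta)|\lesssim m^\nu$ for $0\le\theta\le c/m$; and the uniform Bessel-type asymptotic covering the transition $\theta\simeq 1/m$. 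Because $\mathrm{supp}(a)\subset[1/2,2]$, the sum defining $l_n$ runs only over $m\in[n/2,2n]$, so it has $\simeq n$ terms and the $\beta$-th forward difference of $a(m/n)$ is $\lesssim n^{-\beta}\|a^{(\beta)}\|_\infty$; these two facts produce the overall factor $n$ and the gain $n^{-\alpha}$.

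First I would treat the region $c/n\le\theta,\phi\le\pi-c/n$. Inserting the Darboux formula together with $\|P_m\|^{-2}\simeq m$, the leading part of $l_n(\cos\theta,\cos\phi)$ equals $\kappa(\theta)\kappa(\phi)$ times $\sum_m a(m/n)\,c_m\cos((m+\gamma)\theta+\delta)\cos((m+\gamma)\phi+\delta)$ with $c_m\simeq1$ smooth, $|\Delta^\beta c_m|\lesssim m^{-\beta}$. Product-to-sum rewrites the cosine product as $\tfrac12[\cos((m+\gamma)(\theta-\phi))+\cos((m+\gamma)(\theta+\phi)+2\delta)]$, so it suffices to bound $T_\pm:=\sum_m a(m/n)c_m\,e^{i(m+\gamma)u}$ for $u=\theta\pm\phi\in[-\pi,\pi]$. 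Summation by parts $\alpha$ times: $|e^{iu}-1|\gtrsim|u|$, the boundary terms vanish by compact support of $a$, and each step trades one discrete difference of $a(m/n)c_m$ (worth $n^{-1}$) for a factor $|u|^{-1}$; after $\alpha$ steps $|T_\pm|\lesssim n^{1-\alpha}|u|^{-\alpha}\,\mal_{\beta\le\alpha}\|a^{(\beta)}\|_\infty$, while trivially $|T_\pm|\lesssim n$ when $|u|\lesssim1/n$, hence $|T_\pm|\lesssim n(1+n|u|)^{-\alpha}$. Since here $\kappa(\theta)\simeq(\sin\theta)^{-(d-1)/2}\simeq(\sqrt{\sin^2\theta+n^{-2}})^{-(d-1)/2}$, this already gives the asserted bound for the $\theta-\phi$ term with the sharp exponent $\alpha$ (likewise for the $\theta+\phi$ term), and the remainders $E_m$ are handled identically, being smaller by powers of $(m\theta)^{-1},(m\phi)^{-1}$.

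Next come the endpoint regions, where the degraded exponent $\alpha-(2d-1)$ is produced. If, say, $0\le\theta\le c/n$, one uses $|P_m(\cos\theta)|\lesssim m^\nu$ and $\|P_m\|^{-2}\simeq m$, so the $\theta$-slot contributes $\lesssim m^{\nu+1/2}\simeq n^{(d-1)/2}\simeq(\sqrt{\sin^2\theta+n^{-2}})^{-(d-1)/2}$, matching the required normalization. The obstacle is keeping the $|\theta-\phi|$-decay: the profile multiplying $e^{i(m+\gamma)\phi}$ is now $a(m/n)\,m^{(d-1)/2}$ rather than $\simeq1$, so one factors $m^{(d-1)/2}=n^{(d-1)/2}g(m/n)$ with $g$ smooth on $[1/2,2]$, folds $g$ into $a$, and repeats the $\alpha$-fold summation by parts; moreover, near an endpoint $\kappa$ must be replaced by the uniform Bessel-type asymptotic, which is no longer $m$-independent, so its discrete differences enter as well. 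Tracking every such $m$-dependent factor through the product with the two weight slots, and through the transition zone $\theta\simeq1/n$ handled by the uniform asymptotic, is exactly what forces one to spend decay, and a careful count produces the loss of precisely $2d-1$ powers in the exponent.

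The genuine obstacle is this endpoint analysis: the bulk estimate is a comparatively routine stationary-phase / Abel-summation computation, but propagating the off-diagonal decay through $\theta$ or $\phi\simeq1/n$ and accounting honestly for the $2d-1$ loss requires the full uniform Jacobi asymptotics and careful bookkeeping of all $m$-dependent factors. Two secondary points: (a) if $\alpha$ (or $\nu$) is not an integer, ``summation by parts $\alpha$ times'' should be read as $\lceil\alpha\rceil$ times, with the bound stated via $\lceil\alpha\rceil$-th differences and then interpolated; (b) to pass from $\|a^{(\alpha)}\|_\infty$ to the $\mathcal L^1$-norms $\mal_{1\le\beta\le\alpha}\|a^{(\beta)}\|_{\mathcal L^1}$ one invokes compact support, $\|a^{(\beta)}\|_\infty\lesssim\|a^{(\beta+1)}\|_{\mathcal L^1}$, i.e.\ one carries one extra derivative throughout. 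Assembling the three regimes and symmetrizing under $\theta\leftrightarrow\phi$ and $\theta\mapsto\pi-\theta,\ \phi\mapsto\pi-\phi$ then yields the single displayed bound. (An alternative is to view $l_n=a(\sqrt{-\mathcal D}/n)$ for the Jacobi operator $\mathcal D$ and invoke finite speed of propagation / Gaussian heat-kernel bounds together with smoothness of $a$; but establishing those kernel bounds for $\mathcal D$ itself relies on the same asymptotics, so nothing is really saved.)
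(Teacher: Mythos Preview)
The paper does not prove this theorem at all: it is stated as a direct citation of \cite[Theorem~2.4]{petrushev2005localized} and is used as a black box in the proof of Lemma~\ref{lem_local_poly}. There is therefore no ``paper's own proof'' to compare your proposal against.

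Your sketch is, in fact, a reasonable outline of how Petrushev and Xu themselves prove the result in the cited reference: insert the Darboux asymptotics for Jacobi polynomials, reduce to oscillatory sums via product-to-sum, gain decay by repeated Abel summation, and handle the endpoint regions with Mehler--Heine / uniform Bessel-type asymptotics. The bulk-region argument is solid. Your own diagnosis of the endpoint analysis as the ``genuine obstacle'' is accurate, and your sketch there is honest but thin---getting the precise loss of $2d-1$ powers and making the transition-zone bookkeeping rigorous is exactly the nontrivial content of the original proof, and your proposal does not really carry it out. If you were asked to supply a full proof, that part would need substantial expansion; but for the purposes of this paper, the correct move is simply to cite the result, which is what the authors do.
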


\begin{lemma}\label{lem_local_poly}
For any $\gamma>0$,
        \begin{equation}
            \begin{split}
                &|\varphi_{q}(t)|\lesssim\frac{2^{-q(d+2k)}}{(\sqrt{1+t}+2^{-q})^{\frac{d-1}{2}}(\sqrt{1-t}+2^{-q})^{\frac{d-1}{2}}(1+2^q\sqrt{1-t})^{\gamma}},\\
                &|\varphi_{\ss,q}(t)|\lesssim\frac{2^{-q(d+2k-2\ss)}}{(\sqrt{1+t}+2^{-q})^{\frac{d-1}{2}}(\sqrt{1-t}+2^{-q})^{\frac{d-1}{2}}(1+2^q\sqrt{1-t})^{\gamma}},
            \end{split}
        \end{equation}
where the corresponding constants are independent of $q$ and $t$.
\end{lemma}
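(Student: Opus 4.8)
The plan is to derive Lemma~\ref{lem_local_poly} as a direct specialization of the localization theorem, Theorem~\ref{thm_petru_xu}, after identifying the bump function $a$ and the normalization constants correctly. First I would rewrite the defining sums for $\varphi_q$ and $\varphi_{s,q}$ in the form appearing in Theorem~\ref{thm_petru_xu}. The key observation is that the normalized Legendre polynomials $p_m$ used throughout the paper are, up to the normalization $\|P_m\|_{w_d}^2$, exactly the Jacobi polynomials with index $\tfrac{d-2}{2}$; concretely, $p_m(\cos\theta)p_m(\cos\phi) = P_m(\cos\theta)P_m(\cos\phi)/\|P_m\|_{w_d}^2$, so the summand $\zeta(m/2^q)\xi(m)p_m(t)$ coincides with the kernel $l_{2^q}$ associated to the bump function $a(u) = \zeta(u)\,\xi(2^q u)$ evaluated with one of its two arguments at $\cos\theta$ and the other at $\phi = 0$ (so that $P_m(\cos\phi)=P_m(1)$), up to the factor $P_m(1)/\|P_m\|_{w_d}^2$; alternatively, and more cleanly, one views $\varphi_q$ itself as a zonal function and applies the one-variable version of the localization estimate directly. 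Either way, the scaling parameter is $n \leftrightarrow 2^q$, and $\theta \leftrightarrow \arccos t$, $\phi \leftrightarrow 0$.

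Next I would track the $q$-dependence of the amplitude factor $\max_{1\le\beta\le\alpha}\|a^{(\beta)}\|_{\mathcal L^1}$. Here the point is that $a(u) = \zeta(u)\,\xi(2^q u)$ is supported on $u\in[1/2,2]$, so on that interval $2^q u \simeq 2^q$, and by Lemma~\ref{lem:Bach} (resp.\ Lemma~\ref{cor_mu_esti}) one has $\xi^{(\beta)}(t)\simeq t^{-(d+2k+1)-\beta}$ (resp.\ $\xi_s^{(\beta)}(t)\simeq t^{-(d+2k+1-2s)-\beta}$). Applying the Leibniz rule to $a^{(\beta)}(u) = \sum_{j}\binom{\beta}{j}\zeta^{(\beta-j)}(u)\,2^{qj}\xi^{(j)}(2^q u)$ and using that each term $2^{qj}\xi^{(j)}(2^q u) \simeq 2^{qj}\cdot 2^{-q((d+2k+1)+j)} = 2^{-q(d+2k+1)}$ is uniformly bounded by $2^{-q(d+2k+1)}$ on the support (with $\zeta$ and its derivatives bounded), I get $\|a^{(\beta)}\|_{\mathcal L^1}\lesssim 2^{-q(d+2k+1)}$ for every $\beta\le\alpha$, and similarly $\lesssim 2^{-q(d+2k+1-2s)}$ in the stiffness case. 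Choosing $\alpha$ large enough (depending on $d$ and the desired $\gamma$, namely $\alpha \ge \gamma + (2d-1)$), Theorem~\ref{thm_petru_xu} then yields
\begin{equation*}
    |\varphi_q(\cos\theta)| \lesssim \frac{2^q\cdot 2^{-q(d+2k+1)}}{\big(\sqrt{\sin^2\theta + 2^{-2q}}\big)^{\frac{d-1}{2}}\big(\sqrt{2^{-2q}}\big)^{\frac{d-1}{2}}\big(1 + 2^q|\theta|\big)^{\gamma}}.
\end{equation*}
It remains to convert this to the stated form in the variable $t = \cos\theta$. Using $\sin^2\theta = 1-t^2 = (1-t)(1+t)$ and the elementary comparisons $\sqrt{\sin^2\theta + 2^{-2q}} \simeq (\sqrt{1-t}+2^{-q})(\sqrt{1+t}+2^{-q})\big/\big(\text{one of the factors}\big)$ — more precisely $\sqrt{(1-t)(1+t)} + 2^{-q} \simeq (\sqrt{1-t}+2^{-q})(\sqrt{1+t}+2^{-q})\cdot\min(\ldots)^{-1}$, which I would unwind carefully — together with $\sqrt{2^{-2q}} = 2^{-q} \simeq \sqrt{1-t}+2^{-q}$ when $\theta$ is bounded away from $\pi$, and $|\theta| \simeq \arccos t \simeq \sqrt{1-t}$ near $t=1$, one recovers exactly
\begin{equation*}
    |\varphi_q(t)| \lesssim \frac{2^{-q(d+2k)}}{(\sqrt{1+t}+2^{-q})^{\frac{d-1}{2}}(\sqrt{1-t}+2^{-q})^{\frac{d-1}{2}}(1+2^q\sqrt{1-t})^{\gamma}},
\end{equation*}
and the $\varphi_{s,q}$ bound follows identically with $k$ replaced by $k-s$ in the exponent of $2^{-q}$ (that is, $2^{-q(d+2k-2s)}$, accounting for the $2^q$ in the numerator of Theorem~\ref{thm_petru_xu} against $2^{-q(d+2k+1-2s)}$).

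The main obstacle I anticipate is the bookkeeping in the last conversion step: the localization theorem is naturally phrased in the angular variable $\theta$ with the factors $\sqrt{\sin^2\theta + n^{-2}}$, which are symmetric in $\theta \mapsto \pi-\theta$, whereas the target estimate is phrased asymmetrically in $t$ with separate factors $\sqrt{1+t}+2^{-q}$ and $\sqrt{1-t}+2^{-q}$ plus the one-sided decay factor $(1+2^q\sqrt{1-t})^\gamma$. Reconciling these requires splitting into the regimes $t$ near $1$ (i.e.\ $\theta$ small, where $\phi=0$ is close to $\theta$ and the decay factor is active), $t$ near $-1$ (i.e.\ $\theta$ near $\pi$), and $t$ in the bulk, and checking in each that the claimed bound is consistent with what Theorem~\ref{thm_petru_xu} gives — in particular that the $\phi=0$ normalization contributes $(\sqrt{\sin^2 0 + 2^{-2q}})^{-(d-1)/2} = 2^{q(d-1)/2}$, which combines with the numerator factor $2^q$ and the amplitude $2^{-q(d+2k+1)}$ to produce the net $2^{-q(d+2k)}$ after absorbing one power of $(\sqrt{1-t}+2^{-q})^{-(d-1)/2}$. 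I would also remark that since $\varphi_q$ has degree at most $2^{q+1}$ and the estimate is only nontrivial for $q$ large, the finitely many small-$q$ cases are handled trivially by the crude bound $|\varphi_q(t)|\le \sum_m \xi(m)\|p_m\|_\infty \lesssim 1$.
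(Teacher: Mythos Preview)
Your proposal is correct and follows essentially the same route as the paper's proof. Both arguments set $a(u)=\zeta(u)\,\xi(2^qu)$ (resp.\ $\xi_s$), bound $\|a^{(\beta)}\|_{\mathcal L^1}\lesssim 2^{-q(d+2k+1)}$ (resp.\ $2^{-q(d+2k+1-2s)}$) via the Leibniz rule and Lemmas~\ref{lem:Bach}/\ref{cor_mu_esti}, and then invoke Theorem~\ref{thm_petru_xu} with $n=2^q$, $\theta=\arccos t$, $\phi=0$; the paper is simply terser about the angular-to-$t$ conversion that you (rightly) identify as the bookkeeping step, handling it in one line via $\sqrt{1-t^2+2^{-2q}}\simeq(\sqrt{1+t}+2^{-q})(\sqrt{1-t}+2^{-q})$ rather than splitting into regimes.
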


\begin{proof}
    We can write $p_m(1)=\|p_m\|_{w_d}^2$ and rewrite $\varphi_{\ss,q}$ as
    \begin{equation}
        \varphi_{\ss,q}(t)=\sul_{m=2^{q-1}+1}^{2^{q+1}}\zeta\big(\frac{m}{2^q}\big)\xi_\ss(m)\frac{p_m(t)p_m(1)}{\|p_m\|_{w_d}^2}=\sul_{m=2^{q-1}+1}^{2^{q+1}}\zeta\big(\frac{m}{2^q}\big)\xi_\ss(m)\frac{P_m(t)P_m(1)}{\|P_m\|_{w_d}^2}.
    \end{equation}
    We aim to apply Theorem \ref{thm_petru_xu} to $\theta=\arccos t$ and $\phi=0$. To this end, we analyze the function
    \begin{equation*}
        a(u)=\zeta(u)\xi_\ss(2^qu).
    \end{equation*}
    In this case, for any $\beta\in\NN$, we can write 
    \begin{equation}
        \begin{split}
            \int_{\frac{1}{2}}^2|a^{(\beta)}(u)|du=&\int_{\frac{1}{2}}^2\Bigg|\sul_{\nu=0}^\beta\binom{\beta}{\nu}\zeta^{(\beta-\nu)}(u)\Big(\frac{d}{du}\Big)^\nu(\xi_\ss(2^qu))\Bigg|du\\
            \leq&2^\beta\int_{\frac{1}{2}}^2\max\limits_{0\leq\nu\leq\beta}\big|\zeta^{(\beta-\nu)}(u)2^{q\nu}\xi_\ss^{(\nu)}(2^qu)\big|du
            \lesssim\max\limits_{0\leq\nu\leq\beta}2^{q\nu}\int_{\frac{1}{2}}^2|\xi_\ss^{(\nu)}(2^qu)|du.
        \end{split}
    \end{equation}
    By Lemma \ref{cor_mu_esti}, $|\xi_\ss^{(\nu)}(2^qu)|\lesssim2^{-q(d+2k+1-2\ss)-\nu}$, and consequently
    \begin{equation}
        \|a^{(\beta)}\|_{\mathcal{L}^1}\lesssim2^{-q(d+2k+1-2\ss)},
    \end{equation}
    where the corresponding constant is independent of $q$.

    As a result, Theorem \ref{thm_petru_xu} (with $\theta=\arccos t$ and $\phi=0$) implies for any $\gamma\in\NN$,
    \begin{equation*}
        \begin{split}
            |\varphi_{\ss,q}(t)|\lesssim&\frac{2^{q\frac{d+1}{2}}\mal_{1\leq\beta\leq\gamma+2d-1}\|a^{(\beta)}\|_{\mathcal{L}^1}}{(\sqrt{1-t^2+2^{-2q}})^{\frac{d-1}{2}}(1+2^q\sqrt{1-t})^{\gamma}}\\
            \lesssim&\frac{2^{-q(\frac{d+1}{2}+2k+\gamma-2\ss)}}{(\sqrt{1+t}+2^{-q})^{\frac{d-1}{2}}(\sqrt{1-t}+2^{-q})^{\frac{d-1}{2}+\gamma}}.
        \end{split}
    \end{equation*}
    Same argument holds true to $\varphi_q$.
\end{proof}
Given this lemma, we can analyze the matrices.
\begin{lemma}\label{lem_QQs_norm}
    Let $\gamma>\frac{d+1}{2}$, then for all $q\in\NN$,
        \begin{equation}\label{eqn_QQs_upper}
            \begin{split}
                &\|Q(q)\|_2\lesssim\left\{\begin{array}{ll}
            \displaystyle2^{-q(2k+1)} & \qquad q\leq\log_2(\frac{1}{\underline{h}}) \\
            \displaystyle2^{-q(\frac{d+1}{2}+2k+\gamma)}\underline{h}^{-(\frac{d-1}{2}+\gamma)}, &\qquad q>\log_2(\frac{1}{\underline{h}})
        \end{array}\right.\\
        &\|Q_\ss(q)\|_2\lesssim\left\{\begin{array}{ll}
            \displaystyle2^{-q(2k-2s+1)} & \qquad q\leq\log_2(\frac{1}{\underline{h}}) \\
            \displaystyle2^{-q(\frac{d+1}{2}+2k-2s+\gamma)}\underline{h}^{-(\frac{d-1}{2}+\gamma)}, &\qquad q>\log_2(\frac{1}{\underline{h}})
        \end{array}\right.
            \end{split}
        \end{equation}
        Moreover, there exists $C>0$ such that for all $q\geq\log_2(\frac{C}{\underline{h}})$,
        \begin{equation}\label{eqn_Q_norm_largeq}
            \|Q(q)\|_2\simeq2^{-q(2k+1)},\quad\|Q_\ss(q)\|_2\simeq2^{-q(2k-2\ss+1)}.
        \end{equation}
\end{lemma}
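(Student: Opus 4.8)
The plan is to route every norm through the doubled needlet matrices $R(q),R_\ss(q)$. First I would record two structural facts. (i) Each $P(m)$ is positive semidefinite — by the addition theorem used in \eqref{eqn:expan_sigma_square_G} it equals $\sum_\ell y_{m,\ell}y_{m,\ell}^\top$ with $y_{m,\ell}=(Y_{m,\ell}(\theta_i^*))_i$ — and $\zeta,\xi,\xi_\ss\ge 0$ on $E=E_\ss$, so $Q(q),Q_\ss(q)\succeq 0$; combining this with \eqref{eqn_rel_R_Q}--\eqref{eqn_rel_Rs_Qs} and $\|(a;a)\|^2=2\|a\|^2$ yields $\|Q(q)\|_2\le\tfrac12\|R(q)\|_2$ and $\|Q_\ss(q)\|_2\le\tfrac12\|R_\ss(q)\|_2$. (ii) Reading off the block structure, $|R(q)_{ij}|=|\varphi_q(\eta_i^*\cdot\eta_j^*)|$ and $|R_\ss(q)_{ij}|=|\varphi_{\ss,q}(\eta_i^*\cdot\eta_j^*)|$ for all $i,j\le 2n$ (since $\xi$ is supported on $m\equiv k+1\bmod 2$, converting $p_m(\theta_i^*\cdot\theta_j^*)$ to $p_m(-\theta_i^*\cdot\theta_j^*)$ only introduces a $\pm$ sign depending on $k$ and on the block, irrelevant under $|\cdot|$), where $\{\eta_j^*\}_{j=1}^{2n}=\{\pm\theta_j^*\}_{j=1}^n$ is $\underline{h}$-separated by \eqref{def_h_underlineh}. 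This is exactly why the doubled set is used: since $\xi$ lives on a single parity class, $\varphi_q$ is localized not at one point but at the pair $\{t=1,\,t=-1\}$, and on $\{\pm\theta_j^*\}$ both bumps are controlled by the one packing scale $\underline{h}$. For the finitely many $q<\lceil\log_2(k+1)\rceil+1$, where $R(q)$ is not defined, I would instead use $\|Q(q)\|_2\le\sum_m\zeta(m/2^q)\xi(m)\|P(m)\|_2$ with $\|P(m)\|_2\le nN(m)$ together with the packing bound $n\lesssim\underline{h}^{-d}$.

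For the upper bounds \eqref{eqn_QQs_upper} I would run a Schur/Gershgorin estimate, $\|R(q)\|_2\le\max_i\sum_{j=1}^{2n}|\varphi_q(\eta_i^*\cdot\eta_j^*)|$, and insert Lemma~\ref{lem_local_poly}. Writing $\eta_i^*\cdot\eta_j^*=\cos\rho_{ij}$ and using $\sqrt{1-\cos\rho}\simeq\sin(\rho/2)$, $\sqrt{1+\cos\rho}\simeq\cos(\rho/2)$, that pointwise bound becomes a kernel of width $2^{-q}$ with a main peak at $\rho_{ij}=0$ and a $2^{-q\gamma}$-suppressed secondary peak at $\rho_{ij}=\pi$. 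I would partition the $2n$ indices into: the exceptional ones with $\eta_j^*=\eta_i^*$ or $\eta_j^*=-\eta_i^*$; dyadic geodesic annuli $\rho_{ij}\sim 2^\ell\underline{h}$ around $\eta_i^*$; the mirror annuli around $-\eta_i^*$; and the bulk $\rho_{ij}\in(\tfrac\pi4,\tfrac{3\pi}{4})$. Each annulus of radius $\sim r$ contains $\lesssim(r/\underline{h})^d$ of the $\underline{h}$-separated points (and the whole set $\lesssim\underline{h}^{-d}$), so each block becomes a geometric series in $\ell$ whose tail converges precisely because $\gamma>\tfrac{d+1}{2}$. Summing, the outcome is dictated by the crossover between $2^{-q}$ and $\underline{h}$: in the under-resolved regime ($2^{-q}$ broader than the spacing) the count $\simeq\underline{h}^{-d}$ produces the $\underline{h}$-dependent branch, while in the over-resolved regime ($\underline{h}$ broader than $2^{-q}$) only the self-index contributes at leading order and one gets the clean $2^{-q(2k+1)}$ branch. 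The stiffness case is identical with $k\mapsto k-\ss$ throughout, using the $\varphi_{\ss,q}$ bound in Lemma~\ref{lem_local_poly}.

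For the matching lower bound \eqref{eqn_Q_norm_largeq} I would use positive semidefiniteness again: $\|Q(q)\|_2=\lambda_{\max}(Q(q))\ge Q(q)_{ii}=\sum_{m\in E}\zeta(m/2^q)\xi(m)\,p_m(1)$. By \eqref{eqn_def_Nm} and the identity $p_m(1)=\|p_m\|_{w_d}^2=N(m)\simeq m^{d-1}$, and by Lemma~\ref{lem:Bach}, $\xi(m)\simeq m^{-(d+2k+1)}$ for $m\ge k+1$; moreover $\zeta(m/2^q)\ge c_1$ for the $\simeq 2^q$ indices $m\in E$ with $m/2^q\in[3/5,5/3]$, which lie above $k$ once $2^{q-1}>k$. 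Since $\underline{h}\le\pi$, the hypothesis $q\ge\log_2(C/\underline{h})$ with $C$ large forces precisely $2^{q-1}>k$ (and also $2^{-q}<\underline{h}$, putting us in the clean branch above). Hence $Q(q)_{ii}\simeq 2^q\cdot(2^q)^{-(d+2k+1)}\cdot(2^q)^{d-1}=2^{-q(2k+1)}$, and together with the upper bound $\|Q(q)\|_2\simeq 2^{-q(2k+1)}$; replacing $\xi$ by $\xi_\ss$ and using $\xi_\ss(m)\simeq m^{-(d+2(k-s)+1)}$ from Lemma~\ref{cor_mu_esti} gives $\|Q_\ss(q)\|_2\simeq 2^{-q(2k-2\ss+1)}$.

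The step I expect to be the real obstacle is the scattered-point summation in the under-resolved regime $2^{-q}\gtrsim\underline{h}$: there one must balance the packing count $(r/\underline{h})^d$ against the algebraic factor $(r+2^{-q})^{-(d-1)/2}$ and the localization factor $(1+2^qr)^{-\gamma}$ across the crossover $r\sim 2^{-q}$, and handle the antipodal bump separately — this antipodal concentration is exactly what makes $R(q)$, rather than the $n\times n$ matrix $Q(q)$ on $\{\theta_j^*\}$, the convenient object. Keeping $\gamma$ fixed and large throughout Lemma~\ref{lem_local_poly}, and respecting that the decay $\xi(m)\simeq m^{-(d+2k+1)}$ holds only for $m\ge k+1$ (hence the $2^{q-1}>k$ condition encoded in $q\ge\log_2(C/\underline{h})$), are the places where the bookkeeping must be done with care.
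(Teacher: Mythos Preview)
Your plan is essentially the paper's proof: pass to the $2n\times 2n$ matrix $R(q)$ on the doubled set $\{\pm\theta_j^*\}$ (with separation $\underline{h}$), bound $\|R(q)\|_2$ by the maximal absolute row sum, insert the pointwise localization from Lemma~\ref{lem_local_poly}, and control the row sum by splitting into dyadic geodesic annuli around $\eta_j^*$ and $-\eta_j^*$ together with the packing counts $\#\{i:\rho(\eta_i^*,\pm\eta_j^*)\sim 2^\nu\underline{h}\}\lesssim 2^{\nu d}$. The only substantive difference is in the ``moreover'' part \eqref{eqn_Q_norm_largeq}: you obtain the lower bound from positive semidefiniteness via $\|Q(q)\|_2\ge Q(q)_{ii}$, while the paper instead proves the diagonal-dominance inequality $R(q)_{ii}\ge 2\sum_{j\ne i}|R(q)_{ij}|$, which yields the stronger two-sided estimate $a^\top Q(q)a\simeq 2^{-q(2k+1)}\|a\|_2^2$ for \emph{all} $a$ (i.e.\ $\lambda_{\min}(Q(q))\simeq\lambda_{\max}(Q(q))$); your shortcut suffices for the lemma as stated, but the paper's stronger form is what is actually used downstream in the proof of Theorem~\ref{thm:eigenvalue} to get $\lambda_{\min}(M)\gtrsim\underline{h}^{2k+1}$.
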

\begin{proof}
For $q=0$, the estimation is obvious. We consider $q\geq1$. Following the idea in \cite{liu2025achieving}, we divide the set $\{\eta_i^*:~1\leq i\leq 2n\}$ in terms of the distance to $\eta_j^*$ and $-\eta_j^*$ as
\begin{equation*}
    \{\eta_i^*:~1\leq i\leq n\}=\bigcup\limits_{\nu=-1}^{\lceil\log_2\left(\frac{\pi}{2\underline{h}}\right)\rceil}\left(\mathcal{I}_{\nu,j,+}\cup\mathcal{I}_{\nu,j,-}\right),
\end{equation*}
where
$$\mathcal{I}_{-1,j,-}:=\left\{i:\rho(\eta_i^*,-\eta_j^*)<\underline{h}\right\},\quad\mathcal{I}_{-1,j,+}:=\left\{i:\rho(\eta_i^*,\eta_j^*)<\underline{h}\right\}$$
and for $\nu=0,1,\dots$,
$$\mathcal{I}_{\nu,j,+}:=\{i:2^\nu \underline{h}\leq\rho(\eta_i^*,\eta_j^*)<2^{\nu+1}\underline{h}\},\quad\mathcal{I}_{\nu,j,-}:=\{i:2^\nu \underline{h}\leq\rho(\eta_i^*,-\eta_j^*)<2^{\nu+1}\underline{h}\}.$$
By a measure argument, it is easy to verify
\begin{equation}\label{eqn_est_cardi_I}
    \begin{split}
        &\#\mathcal{I}_{-1,j,+}\lesssim1,\quad\#\mathcal{I}_{-1,j,-}\lesssim1\\
        &\#\mathcal{I}_{\nu,j,+}\simeq2^{\nu d},\quad\#\mathcal{I}_{\nu,j,-}\simeq2^{\nu d},\qquad\nu=0,\dots,\big\lceil\log_2\left(\frac{\pi}{2\underline{h}}\right)\big\rceil,
    \end{split}
\end{equation}
where the corresponding constants are only dependent of $d$. By noticing the formula
$$\sqrt{1-\eta_i^*\cdot \eta_j^*}=\sqrt{1-\cos(\rho(\eta_i^*,\eta_j^*))}=\sqrt{2}\sin\left(\frac{\rho(\eta_i^*,\eta_j^*)}{2}\right),\qquad \eta_i^*\cdot \eta_j^*\geq0,$$

we have
$$\sqrt{1-\eta_i^*\cdot \eta_j^*}\simeq\rho(\eta_i^*,\eta_j^*).$$
By Lemma \ref{lem_local_poly},
\begin{equation*}
    \left|R(q)_{i,j}\right|\lesssim\frac{2^{-q(\frac{d+1}{2}+2k+\gamma)}}{(\rho(\eta_i^*,-\eta_j^*)+2^{-q})^{\frac{d-1}{2}}(\rho(\eta_i^*,\eta_j^*)+2^{-q})^{\frac{d-1}{2}+\gamma}},
\end{equation*}
Since $\gamma>\frac{d+1}{2}$, we have
\begin{equation}\label{eqn_R_sum_ineqj}
    \begin{split}
        &\sul_{i\neq j} |R(q)_{i,j}| \lesssim \sul_{\nu=-1}^{\lceil\log_2\lrt{\frac{\pi}{2\underline{h}}}\rceil} \sul_{i\in\mathcal{I}_{\nu,j,+}\cup\mathcal{I}_{\nu,j,-}, i \neq j} \frac{2^{-q(\frac{d+1}{2}+2k+\gamma)}}{(\rho(\eta_i^*,-\eta_j^*)+2^{-q})^{\frac{d-1}{2}}(\rho(\eta_i^*,\eta_j^*)+2^{-q})^{\frac{d-1}{2}+\gamma}} \\
        \lesssim& \sul_{\nu=0}^{\lceil\log_2\left(\frac{\pi}{2\underline{h}}\right)\rceil} \bigg[\frac{(\#\mathcal{I}_{\nu,j,+}) 2^{-q(\frac{d+1}{2}+2k+\gamma)}}{(1+2^{-q})^{\frac{d-1}{2}}(2^\nu\underline{h}+2^{-q})^{\frac{d-1}{2}+\gamma}} + \frac{(\#\mathcal{I}_{\nu,j,-}) 2^{-q(\frac{d+1}{2}+2k+\gamma)}}{(2^\nu\underline{h}+2^{-q})^{\frac{d-1}{2}}(1+2^{-q})^{\frac{d-1}{2}+\gamma}}\bigg] \\
        &+ \frac{ (\#\mathcal{I}_{-1,j,+})2^{-q(\frac{d+1}{2}+2k+\gamma)}}{(1+2^{-q})^{\frac{d-1}{2}}(\underline{h}+2^{-q})^{\frac{d-1}{2}+\gamma}} + \frac{(\#\mathcal{I}_{-1,j,-}) 2^{-q(\frac{d+1}{2}+2k+\gamma)}}{(\underline{h}+2^{-q})^{\frac{d-1}{2}}(1+2^{-q})^{\frac{d-1}{2}+\gamma}} \\
        \lesssim& \sul_{\nu=0}^{\lceil\log_2\left(\frac{\pi}{2\underline{h}}\right)\rceil} \frac{2^{\nu d} 2^{-q(\frac{d+1}{2}+2k+\gamma)}}{(2^\nu\underline{h}+2^{-q})^{\frac{d-1}{2}+\gamma}} + \frac{2^{-q(\frac{d+1}{2}+2k+\gamma)}}{(\underline{h}+2^{-q})^{\frac{d-1}{2}+\gamma}}
         <\sul_{\nu=0}^{\lceil\log_2\left(\frac{\pi}{2\underline{h}}\right)\rceil}  \frac{2^{\nu d}2^{-q(\frac{d+1}{2}+2k+\gamma)}}{(2^\nu\underline{h})^{\frac{d-1}{2}+\gamma}} + \frac{2^{-q(\frac{d+1}{2}+2k+\gamma)}}{\underline{h}^{\frac{d-1}{2}+\gamma}}\\
        \lesssim&2^{-q(\frac{d+1}{2}+2k+\gamma)}\underline{h}^{-(\frac{d-1}{2}+\gamma)}.
    \end{split}
\end{equation}
and
\begin{equation*}
    |R(q)_{j,j}|\lesssim2^{-q(2k+1)}.
\end{equation*}
Thus
\begin{equation*}
    \sul_{i=1}^{2n} |R(q)_{i,j}| \lesssim\left\{\begin{array}{ll}
            \displaystyle2^{-q(2k+1)}, & \qquad q\leq\log_2(\frac{1}{\underline{h}}), \\
            \displaystyle2^{-q(\frac{d+1}{2}+2k+\gamma)}\underline{h}^{-(\frac{d-1}{2}+\gamma)}, &\qquad q>\log_2(\frac{1}{\underline{h}}).
        \end{array}\right.
\end{equation*}

As $R(q)$ is symmetric, this bound also holds to $\sul_{j=1}^{2n} |R(q)_{i,j}|$ for any $i$. Consequently, the norm of $R(q)$ is estimated as
\begin{equation*}
    \begin{split}
        \|R(q)\|_2 
        \leq &\sqrt{\|R(q)\|_1 \|R(q)\|_\infty}
        = \sqrt{ \max_{1\leq j \leq 2n} \sul_{i=1}^{2n} |R(q)_{i,j}| } 
           \sqrt{ \max_{1\leq i \leq 2n} \sul_{j=1}^{2n} |R(q)_{i,j}| } \\
        \lesssim&\left\{\begin{array}{ll}
            \displaystyle2^{-q(2k+1)}, & \qquad q\leq\log_2(\frac{1}{\underline{h}}), \\
            \displaystyle2^{-q(\frac{d+1}{2}+2k+\gamma)}\underline{h}^{-(\frac{d-1}{2}+\gamma)}, &\qquad q>\log_2(\frac{1}{\underline{h}}).
        \end{array}\right.
    \end{split}
\end{equation*}
Similarly,
\begin{equation*}
    \|R_s(q)\|_2\lesssim\left\{\begin{array}{ll}
            \displaystyle2^{-q(2k-2s+1)}, & \qquad q\leq\log_2(\frac{1}{\underline{h}}), \\
            \displaystyle2^{-q(\frac{d+1}{2}+2k-2s+\gamma)}\underline{h}^{-(\frac{d-1}{2}+\gamma)}, &\qquad q>\log_2(\frac{1}{\underline{h}}).
        \end{array}\right.
\end{equation*}
Together with \eqref{eqn_rel_R_Q} and \eqref{eqn_rel_Rs_Qs} proves \eqref{eqn_QQs_upper}.

For the lower bound, we invoke the diagonal dominance property of $R(q)$ and $R_s(q)$. Clearly, all the diagonal values of $R(q)$ equals to
\begin{equation*}
    R(q)_{i,i}=\varphi_q(\eta_i^*\cdot\eta_i^*)=\varphi_q(1),\qquad i=1,\dots,2n.
\end{equation*}
To estimate this value, we recall $\zeta(t)>c_1>0$ for $t\in[3/5,5/3]$ and the value of the $p_m(1)$ (see, e.g., \cite[(3.7)]{liu2025achieving})
\begin{equation}
    p_m(1)=N(m)\simeq m^{d-1},\qquad m=1,2,\dots.
\end{equation}
Thus
\begin{equation*}
    \begin{split}
        R(q)_{i,i}=&\varphi_{q}(1)= \sul_{m=2^{q-1}+1}^{2^{q+1}}\zeta\big(\frac{m}{2^q}\big)\xi(m)p_m(1)\geq\sul_{\frac{3}{5}2^q\leq m\leq\frac{5}{3}2^q}c_1\xi(m)p_m(1)\\
        \gtrsim&\sul_{\frac{3}{5}2^q\leq m\leq\frac{5}{3}2^q}2^{-q(d+2k+1)}2^{q(d-1)}
        \gtrsim2^{-q(2k+1)}.
    \end{split}
\end{equation*}
Together with \eqref{eqn_R_sum_ineqj}, there exists a constant $C$, such that whenever $2^q\geq C\underline{h}^{-1}$,
\begin{equation*}
    R(q)_{i,i}\geq2\sul_{i\neq j} |R(q)_{i,j}|.
\end{equation*}
As a result,
\begin{equation}\label{eqn:dig_dom_matrix}
    \begin{split}
        R(q) - \left( \frac{1}{2} R(q)_{i,i} \right) I_{2n\times2n} &\succeq 0, \\
        \left( \frac{3}{2} R(q)_{i,i} \right) I_{2n\times2n} - R(q) &\succeq 0.
    \end{split}
\end{equation}
Therefore, for any $a\in\RR^n$,
\begin{equation*}
    \begin{split}
        a^\top Q(q)a=&(a^\top,a^\top)R(q)\binom{a}{a}\geq\frac{1}{2} R(q)_{i,i}(a^\top,a^\top)I_{2n\times2n}\binom{a}{a}\gtrsim2^{-q(k+1)}\|a\|_2^2,\\
        a^\top Q(q)a=&(a^\top,a^\top)R(q)\binom{a}{a}\leq\frac{3}{2} R(q)_{i,i}(a^\top,a^\top)I_{2n\times2n}\binom{a}{a}\lesssim2^{-q(k+1)}\|a\|_2^2.
    \end{split}
\end{equation*}
Same argument can be applied to $Q_\ss(q)$ This proves \eqref{eqn_Q_norm_largeq}.
\end{proof}

\subsection{Estimating the condition number by needlet matrices}

%To finish the proof, 
\begin{proof}[Proof of Theorem \ref{thm:eigenvalue}]

The upper bound of $\lambda_{\mathrm{max}}(M)$ follows directly from the matrix norm inequality:
\begin{equation}\label{eqn:l_max_upper}
    \begin{split}
        \lambda_{\mathrm{max}}(M) &= \|M\|_2 
        \leq \sqrt{\|M\|_1 \|M\|_\infty}
        = \sqrt{ \max_{1\leq j \leq n} \sul_{i=1}^n M_{i,j} } 
          \cdot \sqrt{ \max_{1\leq i \leq n} \sul_{j=1}^n M_{i,j} } \\
        &\leq \sqrt{ \sul_{i=1}^n 1 } \cdot \sqrt{ \sul_{i=1}^n 1 } = n \lesssim \underline{h}^{-d}.
    \end{split}
\end{equation}

% The same argument applies to the stiffness matrix $K_s$, giving
% \begin{equation}
%     \lambda_{\mathrm{max}}(K_s) \lesssim \underline{h}^{-d}.
% \end{equation}

To establish the lower bound for $\lambda_{\mathrm{max}}$, we construct a specific test vector $a\in\RR^n$ and apply the Rayleigh quotient. Define
\[
a = \left( Y_{k+1,1}(\theta^*_1)\tau_1,\dots,Y_{k+1,1}(\theta^*_n)\tau_n \right)^\top,
\]
where $\{\tau_j\}_{j=1}^n$ are the quadrature weights provided by Lemma~\ref{lem:quadrature}. Then, using the orthogonality of spherical harmonics and the expansion of $M$, we obtain

\begin{equation*}
    \begin{split}
        a^\top M a 
        &= \sul_{m=0}^\infty \widehat{\sigma_k}(m)^2 a^\top P(m) a 
        \geq \widehat{\sigma_k}(k+1)^2 a^\top P(k+1) a \\
        &= \widehat{\sigma_k}(k+1)^2 \sul_{\ell=1}^{N(k+1)} a^\top 
        \left( Y_{k+1,\ell}(\theta_i^*) Y_{k+1,\ell}(\theta_j^*) \right)_{i,j=1}^n a \\
        &= \widehat{\sigma_k}(k+1)^2 \sul_{\ell=1}^{N(k+1)} 
        \left( \sul_{j=1}^n Y_{k+1,1}(\theta_j^*) Y_{k+1,\ell}(\theta_j^*) \tau_j \right)^2 \\
        &= \widehat{\sigma_k}(k+1)^2 \sul_{\ell=1}^{N(k+1)} 
        \left( \fint_{\SS^d} Y_{k+1,1}(\theta) Y_{k+1,\ell}(\theta)\, d\theta \right)^2 \\
        &= \widehat{\sigma_k}(k+1)^2.
    \end{split}
\end{equation*}

Meanwhile, using the bound $\tau_j \lesssim h^d$, we estimate the norm of $a$ as
\begin{equation*}
    \|a\|_2^2 = \sul_{j=1}^n Y_{k+1,1}(\theta_j^*)^2 \tau_j^2 
    \lesssim h^d \sul_{j=1}^n Y_{k+1,1}(\theta_j^*)^2 \tau_j = h^d.
\end{equation*}

Therefore, we conclude that
\begin{equation}
    \lambda_{\mathrm{max}}(M) \geq \frac{a^\top M a}{\|a\|_2^2} \gtrsim h^{-d}.
\end{equation}

% A similar argument applies to the stiffness matrix. Using the same test vector $a$ and the spectral expansion of $K_s$ in \eqref{eqn:K_decomp}, we have

% \begin{equation*}
%     \begin{split}
%         a^\top K_s a 
%         &= \sul_{m\in E_s}^\infty 
%         \xi_s(m)
%         a^\top P(m) a \\
%         &\geq \xi_s(k+1) a^\top P(k+1) a= \xi_s(k+1).
%     \end{split}
% \end{equation*}

% Hence,
% \begin{equation}
%     \lambda_{\mathrm{max}}(K) \geq \frac{a^\top K_s a}{\|a\|_2^2} \gtrsim h^{-d}.
% \end{equation}

Consider $\lambda_{\min}$. By Lemma \ref{lem_QQs_norm}, for any $a\in\RR^n$
    \begin{equation*}
        \begin{split}
            a^\top Ma\geq\sul_{q=\lceil\log_2(\frac{C}{\underline{h}})\rceil}^\infty a^\top Q(q)a\simeq\sul_{q=\lceil\log_2(\frac{C}{\underline{h}})\rceil}^\infty2^{-q(2k+1)}\|a\|_2^2\simeq\underline{h}^{2k+1}\|a\|_2^2,
        \end{split}
    \end{equation*}
    which implies
    \begin{equation}
        \lambda_{\mathrm{min}}(M) \gtrsim \underline{h}^{2k+1}.
    \end{equation}
    For the upper bound of $\lambda_{\mathrm{min}}(M)$, we suppress the low-degree contributions and construct a specific test vector supported in a high-frequency space. Let
\[
a = \big( Y_{J,1}(\theta^*_1)\tau_1, \dots, Y_{J,1}(\theta^*_n)\tau_n \big)^\top,
\]
where $J = \lfloor C_1 h^{-1} \rfloor$ and the weights $\{\tau_j\}$ are those from Lemma~\ref{lem:quadrature}. Then for $m \leq J$, by the orthogonality condition \eqref{eqn:quadrature_formula}, we have
\begin{equation}\label{eqn:orthogonal_m<k+1}
    \begin{split}
        a^\top P(m) a 
        &= \sul_{i,j=1}^n a_i a_j Y_{m,\ell}(\theta_i^*) Y_{m,\ell}(\theta_j^*) 
        = \sul_{\ell=1}^{N(m)} \left( \sul_{j=1}^n Y_{m,\ell}(\theta_j^*) a_j \right)^2 \\
        &= \sul_{\ell=1}^{N(m)} \left( \sul_{j=1}^n Y_{m,\ell}(\theta_j^*) Y_{J,1}(\theta_j^*) \tau_j \right)^2 
        = \sul_{\ell=1}^{N(m)} \left( \int_{\SS^d} Y_{m,\ell}(\theta) Y_{J,1}(\theta)\, d\theta \right)^2 = 0.
    \end{split}
\end{equation}

Hence
\begin{equation*}
    a^\top Q(q)a=0,\qquad q=0,1,\dots,\lfloor\log_2J\rfloor-1.
\end{equation*}
Consequently,
    \begin{equation*}
        \begin{split}
            a^\top Ma\leq&\sul_{q=\lfloor\log_2J\rfloor}^\infty a^\top Q(q)a\\
            \lesssim&\sul_{q=\lfloor\log_2J\rfloor}^{\lfloor\log_2(\frac{1}{\underline{h}})\rfloor}2^{-q(2k+1)}\|a\|_2^2+\sul_{q=\lfloor\log_2(\frac{1}{\underline{h}})\rfloor+1}^\infty2^{-q(\frac{d+1}{2}+2k+\gamma)}\underline{h}^{-(\frac{d-1}{2}+\gamma)}\|a\|_2^2
            \lesssim h^{2k+1}\|a\|_2^2.
        \end{split}
    \end{equation*}
    This implies
    \begin{equation}
        \underline{h}^{2k+1}\lesssim\lambda_{\min}\lesssim h^{2k+1}.
    \end{equation}

A similar analysis holds for the stiffness matrix $K_s$. The same proof technique applies, with the matrices $Q(q)$ replaced by $Q_s(q)$ from Lemma~\ref{lem_QQs_norm}.

\end{proof}

\subsection{Spectral analysis by spectral of weighted matrices}

In this section, we normalize $M$ and $K_s$ with the weights in Lemma \ref{lem:quadrature} and provide a more details spectral analysis to the achieved new matrices:
\begin{equation}
    M_\tau:=W^{1/2}MW^{1/2},\quad K_{\ss,\tau}:=W^{1/2}MW^{1/2},
\end{equation}
where $W=\mathrm{diag}(\tau_1,\dots,\tau_n)$.

Our theorem in this section provides perfect characterization of the spectral of $M_\tau$ and $K_{\ss,\tau}$. However, it does not necessarily imply the spectral properties of $M$ and $K_\ss$ since $\tau_j$ does not have a lower bound in general. It is known that, in some special case (e.g. \cite{bondarenko2013optimal}), the property of $\tau_j$'s are good and then we can achieve the spectral properties of the $M$ and $K_\ss$.

% \begin{theorem}\label{thm_spectr}
%     Let
%     $$d_m:=\sul_{i\in E,~i\in m}N(i),$$
%     then there exists some $J_0$ and $C_2$ independent of $n$ such that for $m=J_0,\dots,J$, and $m\in E$,
%     \begin{equation}
%         \lambda_{d_m}(M_\tau)\geq\xi(m),\qquad \lambda_{d_m+1}\leq\max\{\xi(m+2),C_2h^{2k+1}\}.
%     \end{equation}
%     In this case, if $m$ satisfies $\xi(m+2)\geq C_2h^{2k+1}$, we have
%     \begin{equation}
%         \xi(m)\geq\lambda_{d_{m-1}+1}\geq\dots\geq\lambda_{d_{m}}\geq\xi(m+2)
%     \end{equation}
% \end{theorem}

\begin{theorem}\label{thm_spectr}
Let $d,k,n,J$, $\{\theta_j^*\}_{j=1}^n$, and $\{d_m\}_{m=0}^\infty$ be as in Theorem \ref{thm_spectr_main}. There exists some $J_0$ and $C_2$ independent of $n$ such that for $m$ satisfying:
    \begin{eqnarray*}
        &&m+2\in E\cap\{J_0,J_0+1,\dots,J\},\\
        &&\xi(m+2)\geq C_2h^{2k+1},\\
        &&\xi_s(m+2)\geq C_2h^{2(k-s)+1},
    \end{eqnarray*}
    we have
    \begin{equation}\label{eqn_spectr_main}
        \begin{split}
            &\xi(m+2)+C_2h^{d+2k+1}\geq\lambda_{d_{m}+1}(M_\tau)\geq\dots\geq\lambda_{d_{m+2}}(M_\tau)\geq\xi(m+2),\\
            &\xi_\ss(m+2)+C_2h^{d+2(k-s)+1}\geq\lambda_{d_{m}+1}(K_{\ss,\tau})\geq\dots\geq\lambda_{d_{m+2}}(K_{\ss,\tau})\geq\xi_\ss(m+2).
        \end{split}
    \end{equation}
\end{theorem}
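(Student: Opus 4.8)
The idea is to split each normalized Gram matrix into a \emph{low-frequency part} that the quadrature rule diagonalizes \emph{exactly} and a \emph{high-frequency tail} that is a small perturbation; the same argument handles $K_{s,\tau}=W^{1/2}K_sW^{1/2}$ (the displayed definition contains an obvious typo). First I would use the addition formula $p_m(\theta\cdot\eta)=\sum_{\ell=1}^{N(m)}Y_{m,\ell}(\theta)Y_{m,\ell}(\eta)$ to write $P(m)=V_m^\top V_m$ with $V_m=(Y_{m,\ell}(\theta_j^*))_{\ell,j}\in\RR^{N(m)\times n}$, and set $U_m:=V_mW^{1/2}$, so that \eqref{eqn:G_decomp} and \eqref{eqn:K_decomp} become $M_\tau=\sum_{m\in E}\xi(m)U_m^\top U_m$ and $K_{s,\tau}=\sum_{m\in E_s}\xi_s(m)U_m^\top U_m$ (with $E_s=E$ by Lemma~\ref{cor_mu_esti}). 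I then split at the quadrature degree $J$: $M_\tau=A+B$ with $A:=\sum_{m\in E,\,m\le J}\xi(m)U_m^\top U_m$ and $B:=\sum_{m\in E,\,m>J}\xi(m)U_m^\top U_m$, and analogously $K_{s,\tau}=A_s+B_s$.

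The key point is that exactness of the quadrature on $\mathbb P_J(\SS^d)$ (Lemma~\ref{lem:quadrature}, \eqref{eqn:quadrature_formula}) yields $U_mU_{m'}^\top=\delta_{mm'}I_{N(m)}$ for all $m,m'\in E\cap[0,J]$, because $Y_{m,\ell},Y_{m',\ell'}\in\mathbb P_J(\SS^d)$. Hence, stacking the matrices $U_m$ ($m\in E\cap[0,J]$) into a single matrix $\mathbf U$ with $d_J:=\sum_{i\in E,\,i\le J}N(i)$ rows, $\mathbf U$ has orthonormal rows (so in particular $d_J\le n$) and $A=\mathbf U^\top D\mathbf U$ with $D=\operatorname{diag}\big(\xi(i)\text{ repeated }N(i)\text{ times}:\ i\in E\cap[0,J]\big)$. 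Therefore the spectrum of $A$ consists exactly of the values $\xi(i)$, $i\in E\cap[0,J]$, with these multiplicities, together with $0$ of multiplicity $n-d_J$; likewise for $A_s$. By Lemma~\ref{lem:Bach}, $\xi$ is strictly decreasing on $[k+1,\infty)$ (indeed $-\xi'(t)\simeq t^{-(d+2k+2)}>0$ there), and $\xi(i)=\widehat{\sigma_k}(i)^2>0$ for $i\in\{0,\dots,k\}$ by \eqref{eqn_def_xi_orig} and \eqref{eqn:hat_sigma_zero}; by Lemma~\ref{cor_mu_esti} the same holds for $\xi_s$. Choosing $J_0=J_0(d,k,s)\ge k+1$ large enough that $\xi(t)<\min_{0\le i\le k}\xi(i)$ and $\xi_s(t)<\min_{0\le i\le k}\xi_s(i)$ for all $t\ge J_0$, I obtain that for $m+2\in E\cap\{J_0,\dots,J\}$ the eigenvalues of $A$ exceeding $\xi(m+2)$ are precisely those indexed by $i\in E\cap[0,m]$, $d_m$ of them in total, while $\xi(m+2)$ occurs with multiplicity exactly $N(m+2)=d_{m+2}-d_m$ (using $m+1\notin E$). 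Consequently $\lambda_{d_m+1}(A)=\dots=\lambda_{d_{m+2}}(A)=\xi(m+2)$, and similarly $\lambda_{d_m+j}(A_s)=\xi_s(m+2)$ for $1\le j\le N(m+2)$.

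It remains to estimate $B$ and $B_s$. Since $\xi(m),\xi_s(m)\ge0$ for all $m$, both are positive semidefinite, so $M_\tau\succeq A$ and $K_{s,\tau}\succeq A_s$, which already gives the lower bounds $\lambda_{d_m+j}(M_\tau)\ge\xi(m+2)$ and $\lambda_{d_m+j}(K_{s,\tau})\ge\xi_s(m+2)$. For the upper bounds, Weyl's inequality gives $\lambda_{d_m+j}(M_\tau)\le\lambda_{d_m+j}(A)+\|B\|_2=\xi(m+2)+\|B\|_2$, so it suffices to show $\|B\|_2\lesssim h^{d+2k+1}$ and $\|B_s\|_2\lesssim h^{d+2(k-s)+1}$. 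Writing $B=W^{1/2}\big(\sum_{m\in E,\,m>J}\xi(m)P(m)\big)W^{1/2}$ and using $\|W\|_2\lesssim h^d$ (Lemma~\ref{lem:quadrature}), I reduce to bounding the tail $\big\|\sum_{m\in E,\,m>J}\xi(m)P(m)\big\|_2$. With $q_1:=\lfloor\log_2 J\rfloor$, the partition of unity \eqref{eqn_zeta_sum1} gives $0\preceq\sum_{m\in E,\,m>J}\xi(m)P(m)\preceq\sum_{q\ge q_1}Q(q)$, because for every $m$ the scalar $\sum_{q\ge q_1}\zeta(m/2^q)-\mathbf 1_{\{m>J\}}$ multiplying the positive semidefinite matrix $\xi(m)P(m)$ is nonnegative and vanishes once $m>J$. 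Hence $\big\|\sum_{m\in E,\,m>J}\xi(m)P(m)\big\|_2\le\sum_{q\ge q_1}\|Q(q)\|_2$, and Lemma~\ref{lem_QQs_norm}, after splitting the sum at $q=\log_2(1/\underline h)$, using $2^{-q_1}\simeq h$ and $\underline h\le h$, and summing two geometric series, gives $\sum_{q\ge q_1}\|Q(q)\|_2\lesssim h^{2k+1}$; thus $\|B\|_2\lesssim h^{d+2k+1}$, and the same computation with $\|Q_s(q)\|_2$ gives $\|B_s\|_2\lesssim h^{d+2(k-s)+1}$. Choosing $C_2$ to be the largest of the implied constants then yields \eqref{eqn_spectr_main}; the hypotheses $\xi(m+2)\ge C_2h^{2k+1}$ and $\xi_s(m+2)\ge C_2h^{2(k-s)+1}$ guarantee that the identified eigenvalue dominates this perturbation, so the two-sided estimate is sharp. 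I expect the only delicate point to be the bookkeeping in the ordering step — verifying that neither the low-degree block $\{0,\dots,k\}$ nor the tail $E\cap(m+2,J]$ contributes an eigenvalue $\ge\xi(m+2)$ that would shift the index $d_m+1$ — together with handling the sharp cutoff at degree $J$ cleanly in the needlet domination.
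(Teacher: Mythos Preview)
The proposal is correct and follows essentially the same approach as the paper: both split $M_\tau$ (resp.\ $K_{s,\tau}$) into a low-frequency part that the quadrature orthonormality of the vectors $\sqrt{\tau_j}\,Y_{i,\ell}(\theta_j^*)$ diagonalizes exactly, plus a high-frequency tail controlled by the needlet estimates of Lemma~\ref{lem_QQs_norm} together with $\|W\|_2\lesssim h^d$. Your explicit identification of the spectrum of $A$ combined with Weyl's inequality is a cosmetic repackaging of the paper's Courant--Fischer argument with the test subspaces $\bigoplus_{i\in E(m+2)}\mathcal U(i)$ and its orthogonal complement.
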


\begin{proof}%[Proof of Theorem \ref{thm_spectr}]
    Let $E(m+2):=E\cap\{0,\dots,m+2\}$, we define vectors
    \begin{equation}\label{eqn_def_Vml}
        V_{i,\ell}=(\sqrt{\tau_1}Y_{i,\ell}(\theta_1^*),\dots,\sqrt{\tau_n}Y_{i,\ell}(\theta_n^*))^\top,\qquad 1\leq\ell\leq N(i),~i\in E(J)
    \end{equation}
    and linear spaces
    \begin{equation}
        \begin{split}
            \mathcal{U}(i):=\mathrm{span}\big\{V_{i,\ell}:~1\leq\ell\leq N(i)\big\},\qquad i\in E(J).
        \end{split}
    \end{equation}
    By Lemma \ref{lem:quadrature}, the vectors $V_{m,\ell}$ in \eqref{eqn_def_Vml} are orthonormal:
    \begin{equation*}
        V_{i,\ell}^\top V_{i',\ell'}=\sul_{j=1}^n\tau_jY_{i,\ell}(\theta_j^*)Y_{i',\ell'}(\theta_j^*)=\int_{\SS^d}Y_{i,\ell}(\eta)Y_{i',\ell'}(\eta)d\eta=\delta_{(i,\ell),(i',\ell')}.
    \end{equation*}
    With $m+2\in E(J)$, for any $v\in\bigoplus\limits_{i\in E(m+2)}\mathcal{U}(i)$, we can write
    \begin{equation}
        v=\sul_{{i\in E(m+2)}}\sul_{\ell=1}^{N(i)}v_{i,\ell}V_{i,\ell},
    \end{equation}
    where
    \begin{equation*}
        v_{i,\ell}=v_{i,\ell}^\top V_{i,\ell},\qquad1\leq\ell\leq N(i),~i\in E(J).
    \end{equation*}
    Now
    \begin{equation}
        \begin{split}
            M_\tau=&\sul_{i\in E}\xi(i)W^{1/2}P(i)W^{1/2}=\sul_{i\in E}\xi(i)\sul_{\ell=1}^{N(i)}W^{1/2}\begin{pmatrix}
            Y_{i,\ell}(\theta_1^*)\\
            \vdots\\
            Y_{i,\ell}(\theta_1^*)
        \end{pmatrix}\big(Y_{i,\ell}(\theta_1^*),\dots,Y_{i,\ell}(\theta_n^*)\big)W^{1/2}\\
        =&\sul_{i\in E}\sul_{\ell=1}^{N(i)}V_{i,\ell}V_{i,\ell}^\top.
        \end{split}
    \end{equation}

    Let $J_0\geq k+1$ be large enough that
    $$\xi(J_0)\leq\min\Big\{\widehat{\sigma_k}(0)^2,\dots,\widehat{\sigma_k}(k)^2\Big\},$$
    then
    \begin{equation*}
        \begin{split}
            v^\top M_\tau v\geq&\sul_{i\in E(m+2)}\xi(i)\sul_{\ell=1}^{N(i)}v^\top V_{i,\ell}V_{i,\ell}^\top v\\
            =&\sul_{i\in E(m+2)}\xi(i)\sul_{\ell=1}^{N(i)}v_{i,\ell}^2\geq\xi(m+2)\|v\|_2^2,
        \end{split}
    \end{equation*}
    With \eqref{eqn_def_dm},
    $$d_{m+2}=\sul_{i\in E(m+2)}N(i)=\mathrm{dim}\Big(\bigoplus\limits_{i\in E(m+2)}\mathcal{U}(i)\Big),$$
    we can apply Courant–Fischer Theorem and conclude
    \begin{equation}
        \lambda_{d_{m+2}}=\mal_{\substack{\mathcal{U}\subset\RR^n\\ \mathrm{dim}(\mathcal{U})=d_{m+2}}}\mil_{v\in\mathcal{U},\|v\|_2=1}v^\top M_\tau v\geq\mil_{\substack{v\in\bigoplus\limits_{i\in E(m+2)}\mathcal{U}(i)\\
        \|v\|_2=1}}v^\top M_\tau v=\xi(m+2).
    \end{equation}

    On the other hand, we consider $v\in\mathcal{V}(m):=\Big(\bigoplus\limits_{i\in E(m)}\mathcal{U}(i)\Big)^\perp$. Then we have the decomposition
    \begin{equation*}
        v=\sul_{i=m+2}^J\sul_{\ell=1}^{N(i)}v_{i,\ell}V_{i,\ell}+u,
    \end{equation*}
    where $u\in\Big(\bigoplus\limits_{i\in E(J)}\mathcal{U}(i)\Big)^\perp$.
    Then
    \begin{equation*}
        \begin{split}
            \mal_{\substack{v\in\mathcal{V}(m)\\ \|v\|_2=1}}v^\top M_\tau v\leq&\sul_{i\in E(J)}\xi(i)\sul_{\ell=1}^{N(i)}v^\top V_{i,\ell}V_{i,\ell}^\top v+v^\top W^{1/2}\Big(\sul_{i\in E\setminus E(J)}\xi(i)P(i)\Big)W^{1/2}v\\
            \leq&\sul_{i\in E(J)\setminus E(m)}\xi(i)\sul_{\ell=1}^{N(i)}v_{i,\ell}^2+\sul_{q=\lfloor\log_2J\rfloor-1}^\infty (v^\top W^{1/2})Q(q)(W^{1/2}v).
        \end{split}
    \end{equation*}

    By Lemma \ref{lem_QQs_norm},
    \begin{equation*}
        \begin{split}
            &\sul_{q=\lfloor\log_2J\rfloor-1}^{\infty}(v^\top W^{1/2})Q(q)(W^{1/2}v)\\
            \lesssim&\sul_{q=\lfloor\log_2J\rfloor-1}^{\lfloor\log_2(\frac{1}{\underline{h}})\rfloor}2^{-q(2k+1)}\|vW^{1/2}\|_2^2+\sul_{q=\lfloor\log_2(\frac{1}{\underline{h}})\rfloor+1}^\infty2^{-q(\frac{d+1}{2}+2k+\gamma)}\underline{h}^{-(\frac{d-1}{2}+\gamma)}\|vW^{1/2}\|_2^2\\
            \lesssim&h^{2k+1}\|vW^{1/2}\|_2^2=h^{2k+1}\sul_{j=1}^n\tau_jv_j^2\lesssim h^{d+2k+1}\sul_{j=1}^nv_j^2.
        \end{split}
    \end{equation*}
    Let $C_2$ be the corresponding constant, i.e.,
    \begin{equation}\label{eqn_sum_geqJ}
        \sul_{q=\lfloor\log_2J\rfloor-1}^\infty(v^\top W^{1/2})Q(q)(W^{1/2}v)\leq C_2h^{d+2k+1}\|v\|_2^2,
    \end{equation}
    then
    \begin{equation*}
        v^\top Mv\leq\big(\xi(m+2)+C_2h^{d+2k+1}\big)\|v\|_2^2
    \end{equation*}

    Again, Courant–Fischer Theorem gives
    \begin{equation}
        \lambda_{d_m+1}=\mil_{\substack{\mathcal{U}\subset\RR^n\\ \mathrm{dim}(\mathcal{U})=n-d_m}}\mal_{v\in\mathcal{U},\|v\|_2=1}v^\top M_\tau v\leq\mal_{v\in\mathcal{V}(m),\|v\|_2=1}v^\top M_\tau v\leq\xi(m+2)+C_2h^{d+2k+1}.
    \end{equation}

\end{proof}

\begin{proof}[Proof of Theorem \ref{thm_spectr_main}]
When $\tau_1=\dots=\tau_n=n^{-1}$, \eqref{eqn_spectr_main2} follows immediately by Theorem \ref{thm_spectr}. It suffices to prove (1) of Theorem \ref{thm_spectr_main}. In this case, we have
\begin{equation*}
    h\simeq\underline{h}\simeq J^{-1}\simeq n^{-\frac{1}{d}}.
\end{equation*}
Without loss of generality, assume $J_0\in E$. Then for $j\leq d_{J_0}$, we have
\begin{equation*}
    \lambda_j(M_\tau)\geq\lambda_{d_{J_0}}(M_\tau)\geq\xi(J_0).
\end{equation*}

Next, for $d_m+1\leq j\leq d_{m+2}$ with $m+2\in E\cap\{J_0,\dots,J\}$, we can write \eqref{eqn_spectr_main} as
\begin{equation*}
    \xi(m+2)+C_2h^{d+2k+1}\geq\lambda_{d_{m}+1}\geq\lambda_j(M_\tau)\geq\lambda_{d_{m+2}}(M_\tau)\geq\xi(m+2)
\end{equation*}
Since $J\simeq h^{-1}$, by Lemma \ref{lem:Bach}, we have
\begin{equation*}
    \xi(m+2)+C_2h^{d+2k+1}\lesssim m^{-(d+2k+1)}+J^{-(d+2k+1)}\simeq m^{-(d+2k+1)}\simeq\xi(m+2).
\end{equation*}
    By \eqref{eqn_def_Nm} and \eqref{eqn_def_dm}, we have
    \begin{equation*}
        m^d\simeq d_m+1\leq j\leq d_{m+2}\simeq m^d.
    \end{equation*}
    Thus
    \begin{equation*}
        \lambda_j(M_\tau)\simeq m^{-(d+2k+1)}\simeq j^{-\frac{d+2k+1}{d}}.
    \end{equation*}
    For $j>d_{J-2}$, we have
    \begin{equation*}
        \lambda_{d_j}\leq\lambda{d_{J-2}+1}\leq\xi(J)+C_2h^{d+2k+1}\lesssim h^{d+2k+1}.
    \end{equation*}
    Since $h\simeq\underline{h}\leq\mil_{i\neq j}\rho(\theta_i^*,\theta_j^*)$, by Lemma \ref{lem:quadrature},
    \begin{equation*}
        \tau_j\simeq n^{-1},\qquad j=1,\dots,n.
    \end{equation*}
    With $d_{J-2}\simeq J^d\simeq n$, we can combine the above estimations and conclude
    \begin{equation}
        \lambda_j(M)\simeq n\lambda_j(M_\tau)\simeq nj^{-\frac{d+2k+1}{d}},\qquad j=1,\dots,n.
    \end{equation}
    where the upper bound of $\lambda_j(M)$ for $j\leq d_{J-1}$ and the lower bound for $j>d_{J-1}$ is given in Theorem \ref{thm:eigenvalue}. This proves the \eqref{eqn_spectr_main1}. Similar arguments can be applied to the stiffness matrix $K_\ss$ and gives \eqref{eqn_spectr_main1_stiff}.

\end{proof}

% \begin{remark}
% A similar analysis holds for the stiffness matrix $K_s$. The same proof technique applies, with the coefficient sequence $\widehat{\sigma_k}(m)^2$ replaced by $\xi_s(m)$ from Lemma~\ref{cor_mu_esti}. This changes the eigenvalue decay rate to $m^{-(d+2(k-s)+1)}$, consistent with the condition number scaling in Theorem~\ref{thm:eigenvalue_stiff}.
% \end{remark}

%\input{least_square_random}
%\input{random-fearue-cond}

\section{Quasi-uniform Designs Ensure Stability Beyond\\ Random Sampling}\label{sec:random}

Several well–established randomized constructions fix the hidden-layer parameters independently from a prescribed distribution, among them stochastic basis selection~\cite{Igelnik1995}, Extreme Learning Machines (ELM)~\cite{Huang2006,huang2006universal,Liu2014}, and the random feature framework~\cite{rahimi2007random,rahimi2008weighted,bach2017equivalence,li2019towards,nelsen2021random,mei2022generalization}. Typical choices include uniform sampling on the sphere or Gaussian initialization~\cite{he2015delving}. These approaches have been investigated in a broad range of settings, from regression and classification to the numerical solution of PDEs~\cite{lopez2014randomized,belkin2020two,gerace2020generalisation,hu2022universality,dwivedi2020physics,dong2021local,chen2022bridging,zhang2024transferable,li2025local,chi2024random,sun2024domain,lu2025multiple}. On the approximation side, it is known that such randomized feature maps retain nearly optimal rates. For instance, one can show~\cite{liu2025achieving} that, with probability at least $1-\delta$,
\begin{equation}\label{eqn:rate_random}
    \inf_{f_n\in L_n^k}\|f-f_n\|_{\mathcal{L}^2(\Omega)}\lesssim\left(\frac{n}{\log(n/\delta)}\right)^{-\frac{r}{d}}\|f\|_{\mathcal{H}^r(\Omega)},
\end{equation}
which essentially matches the best achievable order. Thus, in terms of approximation power, random feature sampling competes with quasi-uniform constructions, and the key distinction lies in the stability properties of the resulting Gram matrices. We next quantify this difference through estimates of the fill distance $h$ and separation distance $\underline{h}$ for random samples on $\SS^d$.

\begin{lemma}
Let $\theta_1,\dots,\theta_n$ be i.i.d.\ samples from the uniform (surface-area) distribution on $\SS^d$, then
\begin{enumerate}
    \item [(1)] With probability at least $1-\delta$,
    \begin{equation}\label{eqn:rand_samp_quasi}
        h\lesssim\lrt{\frac{\log\frac{n}{\delta}}{n}}^{\frac{1}{d}}.
    \end{equation}
    \item [(2)] With probability at least $1-\delta$,
    \begin{equation}
        \underline{h}\lesssim\left(\frac{\log\frac{1}{\delta}}{n^2}\right)^{\frac{1}{d}}.
    \end{equation}
\end{enumerate}
\end{lemma}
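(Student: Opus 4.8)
The plan is to establish the two bounds separately, both by elementary covering/union-bound arguments built on the single geometric fact that geodesic caps on the sphere satisfy $\mu(B(\theta,r))\simeq r^d$ for $r\in(0,\pi]$, where $\mu$ denotes the normalized surface measure $d\omega$ and $B(\theta,r)=\{\eta\in\SS^d:\rho(\theta,\eta)<r\}$.

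For (1) I would argue that a good fill distance is implied by no cap being unoccupied. Fix a maximal $r$-separated subset $\{c_1,\dots,c_N\}\subset\SS^d$. By the cap volume estimate the balls $B(c_i,r/2)$ are pairwise disjoint, forcing $N\lesssim r^{-d}$, while maximality gives that the balls $B(c_i,r)$ cover $\SS^d$. If each $B(c_i,r)$ contains at least one of the samples $\theta_1,\dots,\theta_n$, then every $\theta\in\SS^d$ lies within $r$ of some $c_i$ and hence within $2r$ of a sample, so $h<2r$. Since the samples are i.i.d.\ uniform, $\mathbb{P}(B(c_i,r)\cap\{\theta_1,\dots,\theta_n\}=\emptyset)=(1-\mu(B(c_i,r)))^n\le e^{-cnr^d}$, and a union bound over the $N\lesssim r^{-d}$ caps yields $\mathbb{P}(h\ge2r)\lesssim r^{-d}e^{-cnr^d}$. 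Choosing $r$ so that $r^d\simeq n^{-1}\log(n/\delta)$ with a sufficiently large (dimension-dependent) constant makes this probability $\le\delta$; in the degenerate regime $n^{-1}\log(n/\delta)\gtrsim1$ the asserted bound holds trivially because $h\le\pi$. This gives $h\lesssim\big(n^{-1}\log(n/\delta)\big)^{1/d}$ with probability at least $1-\delta$.

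For (2) I would use a birthday-paradox argument, noting first that $\underline{h}\le\min_{i\neq j}\rho(\theta_i,\theta_j)$, so it suffices to produce two distinct samples sharing a cell of a fine partition. Fix $\epsilon\simeq\big(\log(1/\delta)/n^2\big)^{1/d}$, take a maximal $\epsilon$-separated set, and let $\{V_1,\dots,V_K\}$ be its Voronoi partition; each cell has diameter at most $2\epsilon$ and, by the cap volume estimate, measure $\mu(V_\ell)\simeq\epsilon^d\simeq1/K$, so that $\mu(V_\ell)\ge c_0/K$ for a constant $c_0=c_0(d)$, and on choosing the constant in $\epsilon$ large enough we also arrange $K\le c_0 n(n-1)/(2\log(1/\delta))$. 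Conditioning on the samples one at a time and using $\mu(V_\ell)\ge c_0/K$, the probability that all $n$ of them fall in distinct cells is at most $\prod_{i=0}^{n-1}(1-ic_0/K)\le\exp\!\big(-c_0 n(n-1)/(2K)\big)\le\delta$. Hence, with probability at least $1-\delta$, some cell contains two distinct samples $\theta_i,\theta_j$, whence $\underline{h}\le\rho(\theta_i,\theta_j)\le2\epsilon\simeq\big(\log(1/\delta)/n^2\big)^{1/d}$; when $\log(1/\delta)\gtrsim n^2$ the bound is again trivial since $\underline{h}\le\pi$.

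I do not expect a real obstacle here---both arguments are standard---so the ``hard part'' is only a matter of bookkeeping: (i) checking the cap volume estimate and the induced counts $N\simeq r^{-d}$, $K\simeq\epsilon^{-d}$ with constants uniform in the scale; (ii) running the birthday estimate with cells of merely comparable (not equal) measure, which the lower bound $\mu(V_\ell)\ge c_0/K$ handles in the sequential conditioning (one should also note that if a partial product factor $1-ic_0/K$ would go negative then the corresponding event is already impossible, so the exponential bound still applies); and (iii) discarding the degenerate ranges of $\delta$ and $n$ where the target bounds are vacuous. Fixing the absolute constants in $r$ and $\epsilon$ large enough, depending only on $d$, then closes both estimates uniformly over $n\ge2$ and $\delta\in(0,1)$.
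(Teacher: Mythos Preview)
Your proposal is correct and follows essentially the same covering/union-bound and partition/birthday arguments as the paper. The only noteworthy difference is in part~(2): the paper bounds the probability that all cell labels are distinct via Maclaurin's inequality (maximizing the elementary symmetric polynomial $e_n(p_1,\dots,p_N)$ under $\sum p_u=1$ at $p_u\equiv 1/N$), whereas you use a sequential-conditioning argument relying on a uniform lower bound $\mu(V_\ell)\ge c_0/K$; both routes yield the same exponential bound $\exp(-cn(n-1)/K)$.
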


\begin{proof}
let $N\in\NN$ be an integer to be determined later and divide $\SS^d$ into disjoint subsets
    \begin{equation*}
        \SS^d=\bigcup_{u=1}^NA_u
    \end{equation*}
    with
    \begin{equation}
        \fint_{A_u}1d\eta\simeq N^{-1},\quad\mathrm{diam}(A_u)\simeq N^{-\frac{1}{d}},\qquad u=1,\dots,N,
    \end{equation}
    where the corresponding constants are only dependent of $d$.
    
    \textbf{Proof of (1).} Now denote the event
    \begin{equation*}
        \mathcal{A}:=\left\{\hbox{There exists some $i$ such that }A_u\cap\{\theta_j\}_{j=1}^n=\varnothing\right\}.
    \end{equation*}
    Then there exists some $c=c(d)$ such that
    \begin{equation}\label{eqn:prob_A1}
        \begin{split}
            \mathrm{Prob}(\mathcal{A})\leq&\sul_{u=1}^N\mathrm{Prob}\lrt{A_u\cap\{\theta_j\}_{j=1}^n=\varnothing}=\sul_{u=1}^N\prod\limits_{j=1}^n\mathrm{Prob}\lrt{\theta_j\in\SS^d\setminus A_u}\\
            \leq&\sul_{u=1}^N(1-cN^{-1})^n\leq N\lrt{\frac{1}{e}}^{\frac{cn}{N}}.
        \end{split}
    \end{equation}
    Taking $\displaystyle N=\left\lfloor\frac{cn}{\log\frac{cn}{\delta}}\right\rfloor$, then \eqref{eqn:prob_A1} yields
    \begin{equation}
        \mathrm{Prob}(\mathcal{A})\leq\delta.
    \end{equation}
    That is, with probability at least $1-\delta$,
    \begin{equation*}
        A_u\cap\{\theta_j\}_{j=1}^n\neq\varnothing,\qquad u=1,\dots,N.
    \end{equation*}
    For any $\eta\in\SS^d$, there exists some $u$ such that $\theta\in A_u$. Let $\theta^*$ be a point in $A_u\cap\{\theta_j\}_{j=1}^n$, then
    \begin{equation*}
        \rho\lrt{\eta,\theta^*}\leq\mathrm{diam}(A_u)\lesssim N^{-\frac{1}{d}}\lesssim\lrt{\frac{\log\frac{n}{\delta}}{n}}^{\frac{1}{d}}.
    \end{equation*}
    Therefore, 
    \begin{equation}
        h\lesssim\lrt{\frac{\log\frac{n}{\delta}}{n}}^{\frac{1}{d}}.
    \end{equation}

    \textbf{Proof of (2).}
    Map each point $\theta_j$ to the index $u$ it falls into:
    $$\theta_j\mapsto u_j$$
    with $\theta_j\in A_{u_j}$. Now denote the event
    \begin{equation*}
        \mathcal B:=\left\{\hbox{the labels $u_1,\dots,u_n$ are distinct}\right\}.
    \end{equation*}
    If $\mathcal B$ fails, then some $i\neq j$ satisfy $u_i=u_j$, hence
    \[
      \rho(\theta_i,\theta_j)\le \mathrm{diam}(A_{u_i})\lesssim N^{-1/d}.
    \]

    Next we bound $\PP(\mathcal B)$. Let $p_u:=\PP(u_j=u)$ be the (normalized) surface measure of $A_u$; by construction $p_u\simeq N^{-1}$ and $\sum_{u=1}^N p_u=1$. Since the samples are i.i.d.,
    \[
      \PP(\mathcal B)=n!\!\!\sum_{1\le k_1<\cdots<k_n\le N}\! p_{k_1}\cdots p_{k_n}
      \;=\; n!\,e_n(p_1,\dots,p_N),
    \]
    where $e_n$ is the $n$th elementary symmetric polynomial. Maclaurin’s inequality (or the convexity/majorization argument that maximizes $e_n$ under $\sum p_u=1$ at $p_u\equiv 1/N$) yields
    \[
      e_n(p_1,\dots,p_N)\le \binom{N}{n}\Big(\frac{1}{N}\Big)^n,
    \]
    hence
    \[
      \PP(\mathcal B)\le \frac{N(N-1)\cdots(N-n+1)}{N^n}=\prod_{k=0}^{n-1}\Big(1-\frac{k}{N}\Big)
      \;\le\; \exp\!\Big(-\frac{1}{N}\sum_{k=0}^{n-1}k\Big)
      \;=\; \exp\!\Big(-\frac{n(n-1)}{2N}\Big).
    \]

    Now choose
    $$N=\left\lfloor\frac{n(n-1)}{2\log\frac{1}{\delta}}\right\rfloor,$$
    then we have $\PP(\mathcal B)\leq\delta$. That is, with probability at least $1-\delta$, there exists some $i\neq j$ such that $u_i=u_j$, which implies
    \begin{equation}
        \rho(\theta_i,\theta_j)\leq2\mathrm{diam}(A_{u_i})\lesssim N^{-\frac{1}{d}}\simeq\left(\frac{\log\frac{1}{\delta}}{n^2}\right)^{\frac{1}{d}}.
    \end{equation}
    This proves $\underline{h}\leq\mil_{i\neq j}\rho(\theta_i,\theta_j)\lesssim\left(\frac{\log\frac{1}{\delta}}{n^2}\right)^{\frac{1}{d}}.$

\end{proof}

This lemma shows that, with high probability, random feature sampling yields a fill distance $h$ of essentially the same order as in the quasi-uniform case, up to logarithmic factors. However, the separation distance $\underline{h}$ can be much smaller, of order $n^{-2/d}$ rather than $(n/\log n)^{-1/d}$. According to our main theorems, both the upper bounds for $\lambda_{\max}$ and the lower bounds for $\lambda_{\min}$ depend on $\underline{h}$. As a consequence, the condition numbers of $M$ and $K_s$ can deteriorate significantly in the random setting.

To illustrate the role of $\underline{h}$, consider the extreme case (an event of probability zero) where two parameters coincide, i.e.\ $\theta_i=\theta_j$ for some $i\neq j$. In this situation, the corresponding mass and stiffness matrices become singular and the condition number is undefined. Thus, although random feature methods preserve near-optimal approximation rates, they lack uniform guarantees on numerical stability: the mass and stiffness matrices $M$ and $K_s$ may in principle become severely ill-conditioned. This stands in sharp contrast to the quasi-uniform design, which ensures polynomially controlled condition numbers, and highlights the trade-off between the simplicity of random sampling and the guaranteed stability afforded by structured constructions.

\section{Numerical Experiments}\label{sec:numerics}

To empirically validate our theoretical findings, we conduct a numerical experiment to study the condition number growth of the mass matrix $M$. The basis functions are $\sigma_k(\theta \cdot x)$, with directions $\{\theta_j^*\}_{j=1}^n$ chosen to be quasi-uniform points on the sphere $\SS^d$.   The sets of quasi-uniform points $\{\theta_j^*\}_{j=1}^n \subset \SS^d$ used in our experiments are generated by numerically minimizing the Riesz $s$-energy functional. For a set of $n$ points $X_n = \{\theta_1, \dots, \theta_n\}$ on the sphere, the Riesz energy is defined as
\[
E_s(X_n) = \sum_{i \neq j} \|\theta_i - \theta_j\|^{-s}.
\]
Minimizing this energy is a well-known method for producing point sets that are asymptotically uniformly distributed and satisfy the quasi-uniformity condition $h \simeq \underline{h}$ \cite{saff1997distributing}. For our experiments, we choose $s=d$ and use a gradient-based optimization method to find approximate minimizers, which serve as our quasi-uniform parameter sets.

For various combinations of dimension $d$, activation order $k$, and number of neurons $n$, we assemble the mass matrix $M$ using quasi-Monte Carlo integration. We then compute its condition number $\kappa(M) = \lambda_{\max}(M) / \lambda_{\min}(M)$.

The results are presented in Figure~\ref{fig:condition_number_growth}, which shows the computed condition numbers $\kappa(M)$ plotted against the number of neurons $n$ on a log-log scale. The plots also include a reference line with the slope predicted by our main result, Theorem~\ref{thm:eigenvalue}. Specifically, the theoretical growth rate is $\mathcal{O}(n^{\alpha})$, where the exponent $\alpha$ is given by
\[
    \alpha = \frac{d + 2k + 1}{d}.
\]
As shown in the figure, the empirical data closely follows the theoretical slope across all tested configurations. This alignment provides strong numerical evidence for the sharpness of our condition number estimates for ReLU$^k$ bases on the sphere.

\begin{figure}[H]
        \centering
        \includegraphics[width=0.6\linewidth]{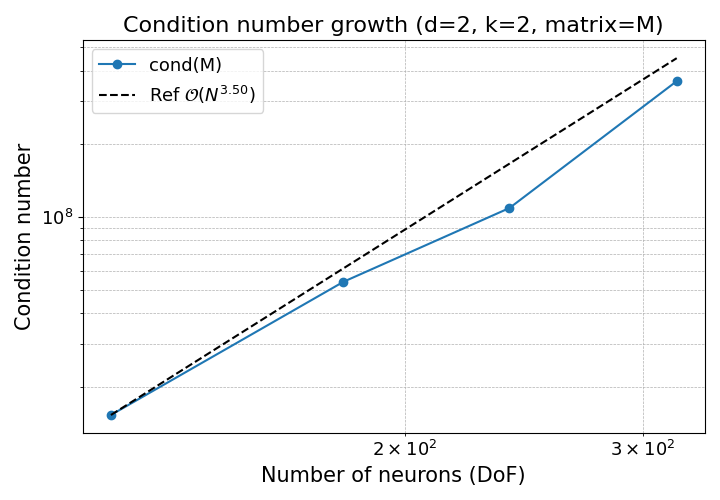}
        \caption{Condition number $\kappa(M)$ of the mass matrix versus the number of neurons $n$ for different dimensions $d$ and activation orders $k$. The dashed lines represent the theoretical growth rate $\mathcal{O}(n^{(d+2k+1)/d})$. The empirical results show excellent agreement with the theory.}
        \label{fig:condition_number_growth}
\end{figure}

\section{Conclusion and Outlook}\label{sec_conclu}

In this work, we established sharp asymptotic estimates for the condition numbers of the mass and stiffness matrices associated with shallow ReLU$^k$ neural networks on the unit sphere $\SS^d$. Specifically, we proved that the condition numbers scale as
\[
\kappa(M) \simeq n^{\frac{d+2k+1}{d}}, \qquad \kappa(K_s) \simeq n^{\frac{d+2(k-s)+1}{d}}
\]
under antipodally quasi-uniform distribution of the parameters. These estimates reveal an explicit interplay between the network width $n$, activation smoothness $k$, and spatial dimension $d$.

Beyond providing precise growth rates, our results highlight a fundamental structural analogy between neural networks and classical finite element methods. In both settings, the mass and stiffness matrices govern the stability of the linear systems arising in projection-based approximation or least-squares optimization. Our analysis, grounded in spherical harmonics, Legendre expansions, and discrete summation by parts, offers a rigorous framework for understanding the spectral properties of overparameterized neural models on geometric domains.

The comparison with finite element methods further emphasizes a universal trade-off between approximation power and numerical stability, but also reveals an important distinction: in FEM, conditioning deteriorates when measured in higher Sobolev norms $\mathcal{H}^s$, while in linearized ReLU$^k$ networks the dependence on the derivative order $s$ is reversed due to the structure of the activation functions. Our comparison with finite element methods further reinforces the idea that higher approximation power inevitably leads to increased spectral instability, reflecting a universal principle in approximation theory and machine learning alike. Our results provide a rigorous explanation for the "curse of conditioning" that plagues methods like the shallow Ritz method in high dimensions. This highlights that for such methods to be viable in practice, the development of effective preconditioning strategies is not just an optimization but an essential requirement.

Several directions remain open. First, it would be interesting to extend the current analysis beyond the sphere $\SS^d$, for example to more general compact Riemannian manifolds or to discrete point clouds with approximate manifold structure. Second, the assumption of quasi-uniform parameter distributions could be relaxed to allow for clustered or learned parameters. Third, it would be valuable to study how these condition number bounds influence optimization dynamics and generalization behavior, particularly in deeper settings. Finally, connecting our condition number estimates with bounds on training error or capacity may provide insight into practical aspects of neural network training.

Overall, our results lay the mathematical groundwork for a precise understanding of conditioning and stability in structured neural architectures from the perspective of spectral analysis.

\bibliographystyle{abbrv}
	\bibliography{ref}
\end{document}